\newcommand{\subtitle}[1]{%
  \posttitle{%
    \par\end{center}
    \begin{center}\large#1\end{center}
    \vskip0.5em}%
}
\theoremstyle{plain}
\newtheorem{theorem}{Theorem}[section]
\newtheorem{lemma}[theorem]{Lemma}
\newtheorem{corollary}[theorem]{Corollary}
\theoremstyle{definition}
\newtheorem{remark}[theorem]{Remark}
\renewcommand{\P}{\operatorname{\mathbb P}}
\newcommand{\expect}{\operatorname{\mathbb{E}}}
\DeclareMathOperator{\Uniform}{unif}
\newcommand{\iid}{\stackrel{\textup{iid}}{\sim}}
\newcommand{\ind}{\mathds{1}}
\DeclareMathOperator{\median}{med}
\DeclareMathOperator{\card}{card}
\newcommand{\ran}{\textup{ran}}
\newcommand{\nonada}{\textup{nonada}}
\newcommand{\ada}{\textup{ada}}
\newcommand{\eps}{\varepsilon}
\newcommand{\embed}{\hookrightarrow}
\renewcommand{\vec}{\boldsymbol}
\newcommand{\R}{{\mathbb R}}
\newcommand{\N}{{\mathbb N}}
\DeclareMathAlphabet{\mathup}{OT1}{\familydefault}{m}{n}
\DeclareMathOperator{\shrink}{Shrink}
\DeclareMathOperator{\spot}{Spot}
\DeclareMathOperator{\select}{BucketSelect}
\newcommand{\wt}{\widetilde}
\newcommand{\widebar}[1]{\mbox{\kern1.5pt\hbox{\vbox{\hrule height 0.6pt \kern0.35ex
        \hbox{\kern-0.15em \ensuremath{#1 }\kern0.0em}}}}\kern-0.1pt}
\newlength{\fixboxwidth}
\definecolor{darkgreen}{rgb}{0,0.5,0}
\begin{document}

\title{Uniform approximation of vectors using adaptive randomized information}

\author{Robert J. Kunsch\thanks{RWTH Aachen University,
at the Chair of Mathematics of Information Processing,
Pontdriesch~10,
52062 Aachen, Email: kunsch@mathc.rwth-aachen.de
}, 
Marcin Wnuk\thanks{Institut für Mathematik, 
Osnabrück University, Albrechtstraße 28a, 49076 Osnabrück, 
Email: marcin.wnuk@uni-osnabrueck.de, corresponding author}}

\date{\today}

\maketitle
\begin{abstract}
We study approximation of the embedding $\ell_p^m \embed \ell_{\infty}^m$, $1 \leq p \leq 2$,
based on randomized adaptive algorithms that use arbitrary linear 
functionals as information on a problem instance.
We show upper bounds for which the complexity $n$  
exhibits only a $(\log\log m)$-dependence. 
Our results for $p=1$ 
lead to an example of a gap of order $n$ (up to logarithmic factors)
for the error between best adaptive and non-adaptive Monte Carlo methods.
This is the largest possible gap for linear problems.
\end{abstract}

{\bf Keywords: } Monte Carlo approximation, information-based complexity, upper bounds,
adaption

\section{Introduction}

Let $S \colon F \to G$ be a linear operator between Banach spaces $F$ and $G$.
Approximating $S$ by a randomized algorithm $A_n$ means that, for any given input $f \in F$, $A_n$ yields a $G$-valued random variable $A_n(f)$.
Its accuracy can be measured in terms of the expected error with respect to the $G$-norm.
The error should be reasonably small not just for an individual input~$f$ but for any input from the unit ball of $F$.
The usual way to define the Monte Carlo error of $A_n$ is
\begin{equation} \label{eq:MCerr}
    e(A_n,S) := \sup_{\|f\|_F \leq 1} \expect \|A_n(f) - S(f)\|_G \,.
\end{equation}
Typical algorithms for linear problems are homogeneous~\cite{KK24}, 
that is, 
for any scalar $t \in \R$ we have $A_n(t f) = t A_n(f)$,
and the algorithms presented in this paper are no exception.
Homogeneity implies $\expect \|A_n(f) - S(f)\|_G \leq \|f\|_F \, \cdot \, e(A_n,S)$
which justifies defining the error by the supremum over inputs from the unit ball in~$F$.
In \emph{information-based complexity} (IBC)~\cite{TWW88},
the main bottleneck for an algorithm~$A_n$ is
that it must generate an output based on incomplete information,
in this paper we allow up to $n$ evaluations $L_i(f)$ of linear functionals $L_i \in F'$,
so-called \emph{measurements}.
The number~$n$ is what we call the \emph{cardinality} $\card(A_n)$.
\emph{Non-adaptive} randomized algorithms $A_n$ exhibit a clear structure:
\begin{enumerate}
    \item Draw a list of $n$ functionals $(L_1,\ldots,L_n) \in (F')^n$
        according to a chosen probability distribution on $(F')^n$.
    \item Take measurements $\vec{y} = (y_1, \ldots, y_n) = N(f) := (L_1(f),\ldots,L_n(f))$ to obtain information on the input~$f$.
    \item Compute an output $\phi(\vec{y}) \in G$,
        where also the knowledge on the choice of the information map~$N$ may be taken into account
        and even additional random numbers can be used.
\end{enumerate}
In contrast, for \emph{adaptive} algorithms,
the first and the second stage are intertwined:
For $i = 2,\ldots,n$, the functional $L_i$ may be drawn while taking into account previously measured information $y_1,\ldots,y_{i-1}$.
In the quest for optimal algorithms we define \emph{minimal errors}
\begin{equation*}
    e^{\ran}(n,S) := \inf_{A_n} e(A_n,S)
\end{equation*}
where the infimum is take over all (possibly adaptive) randomized algorithms $A_n$ that use at most $n$ linear measurements. If we restrict the infimum to non-adaptive algorithms,
we write $e^{\ran,\nonada}(n,S)$.
One may also restrict to deterministic \mbox{methods}, i.e., algorithms that do \emph{not} use random numbers, but here we focus on randomized methods.
Studying lower bounds for these abstract error notions~\cite{H92,KNW24,Ma91}
requires a more formal description of general algorithms.
In the context of upper bounds, however, it only matters to give a clear description and analysis of specific methods.

In this paper we describe and analyse adaptive algorithms
for approximating the identical embedding on $\R^m$ with respect to different norms,
\begin{equation*}
    S \colon \ell_p^m \to \ell_q^m, \quad \vec{x} \mapsto \vec{x} \,,
\end{equation*}
where for $1 \leq p < \infty$, $\ell_p^m$ is the vector space $\R^m$ equipped with the $\ell_p$-norm
\begin{equation*}
    \|\vec{x}\|_p := \left(|x_1|^p + \ldots + |x_m|^p\right)^{1/p} \,,
\end{equation*}
and for the target space we mainly consider the case of $q=\infty$ with the uniform norm
\begin{equation*}
    \|\vec{x}\|_{\infty} := \max\{|x_1|,\ldots,|x_m|\} \,.
\end{equation*}
We abbreviate $S$ by the shorthand $\ell_p^m \embed \ell_q^m$.
These finite-dimensional sequence space embeddings are important for understanding approximation of more complex infinite-dimensional operators such as Sobolev embeddings~\cite{H92,Ma91}
and approximation in general, a short survey about some classical results in these sequence spaces is contained in the introduction of~\cite{KNW24}.

One fundamental question in numerical analysis is whether adaptive methods are more powerful than non-adaptive methods~\cite{No96}. 
This question crucially depends on the assumptions we make:
Linearity of the solution operator $S$ or weaker properties, the geometry of the input set (in our case the convex and symmetric unit ball of~$F$), the error criterion, the class of admissible information functionals, and whether we consider deterministic or randomized algorithms.
There even exist numerical problems that can only be solved using adaptive algorithms~\cite{KNR19}.
For linear problems (with convex and symmetric input sets),
however, it is well known that in the deterministic setting adaption essentially does not help~\cite[Sec~2.7]{TW80}.
In the randomized setting it was only recently established that adaption \emph{does} make a difference for some linear problems:
Heinrich~\cite{He24,He22,He23b,He23} studied adaption for several problems in 
a setting where admissible information consists of function evaluations, in~\cite{He23} he gave an example where the gap between the error of adaptive and non-adaptive methods could reach a factor of order $n^{1/2}$ (up to logarithmic terms).
In a setting where arbitrary linear functionals can be evaluated for information,
a similar gap of order $n^{1/2}$ was shown in~\cite{KNW24},
namely for sequence space approximations $\ell_1^m \embed \ell_2^m$.
It remained open how big such a gap can be.
The current paper is a direct continuation of~\cite{KNW24}.
By establishing upper bounds for the approximation error of $\ell_1^m \embed \ell_\infty^m$ using adaptive algorithms, see Theorem~\ref{thm:errorrates},
in combination with lower bounds for the same problem and non-adaptive algorithms~\cite[Thm~2.7]{KNW24},
we find that the gap 
can be of order $n$, up to logarithmic terms,
see Theorem~\ref{thm:adagap} and Corollary~\ref{cor:diagop}.
In a recent paper~\cite{KNU24} it is found that a gap of this size is as large as it can get.

The adaptive algorithm we present in this paper is based on several ideas that have been developed by Woodruff and different co-authors~\cite{IPW11,LNW17,LNW18}.
They study the slightly different problem of \emph{stable sparse recovery}
where a recovery scheme with output $\vec{x}^\ast$ for approximating any given vector $\vec{x} \in \R^m$ is said to provide an \emph{$\ell_p/\ell_q$ guarantee} for $k$-sparse approximation
if, for some $\epsilon > 0$, the bound
\begin{equation*}
    \|\vec{x} - \vec{x}^\ast\|_p \leq (1+\epsilon) \min_{\text{$k$-sparse }\vec{x}'} \|\vec{x} - \vec{x}'\|_q
\end{equation*}
holds with high probability. Schemes for this problem can be used to tackle sequence space embeddings,
for instance, in \cite[Thm~3.1]{KNW24} an adaptive $\ell_2/\ell_2$ guarantee has been used for solving the linear problem $\ell_1^m \embed \ell_2^m$.
We might as well use an $\ell_\infty/\ell_2$ guarantee~\cite[Thm~3 \& Sec~3]{NSWZ18} to establish upper bounds for the problem $\ell_p^m \embed \ell_\infty^m$.
In this paper, however, we give a comprehensive presentation of all methods required for $\ell_p^m \embed \ell_\infty^m$ and, in order to keep the presentation self-contained, include all essential proofs
with explicit estimates.
The fact that sparse recovery has slightly different demands than the linear problem we are interested in,
allows for certain simplifications in the analysis.
In an upcoming paper on $\ell_q$-approximation for $q < \infty$ it even turns out that the algorithm itself can be simplified~\cite{KW24c}.
On the other hand, we are interested in the full regime of $1 \leq p < q \leq \infty$,
which adds another layer to the analysis.

The paper is structured as follows:
In Section~\ref{sec:blocks} we review all essential parts of the algorithm one-by-one.
In Section~\ref{sec:shrink} we describe the adaptive core procedure which provides the speed-up leading to a $\log\log m$-dependence of the cardinality (rather than a $\log m$ dependence known from non-adaptive methods).
In Section~\ref{sec:AdaAppInf} we combine everything to describe the algorithm and analyse its cost
which is finally translated into the error rates of our main result Theorem~\ref{thm:errorrates}.
In Section~\ref{sec:adagap} we apply these bounds to particular examples to demonstrate the potential superiority of adaptive methods over non-adaptive algorithms.

\subsection*{Asymptotic notation}

We use asymptotic notation to compare functions $f$ and $g$ that depend on variables $(\eps,\delta,m)$ or $(n,m)$, writing \mbox{$f \preceq g$} if there exists a constant \mbox{$C > 0$} such that $f \leq C g$ holds for ``large~$m$ and small~$\eps,\delta$'' (say, for $m \geq 16$ and \mbox{$0 < \eps,\delta < \frac{1}{2}$}), 
or for ``large $m$ and $n$'', respectively. \emph{Weak asymptotic equivalence} $f \asymp g$ means $f \preceq g \preceq f$. We also use \emph{strong asymptotic equivalence} $f \simeq g$ to state $f/g \to 1$ for $\eps,\delta \to 0$.
The implicit constant~$C$ or the convergence $f/g$ is to be understood for fixed $p$.

\section{Toolkit for adaptive approximation}
\label{sec:blocks}

For $m \in \N$ let $[m] := \{1,\ldots,m\}$,
for $\vec{x} = (x_i)_{i\in [m]} \in \R^m = \R^{[m]}$ and $K \subseteq [m]$ define the sub-vector $\vec{x}_K := (x_i)_{i \in K} \in \R^K$.
The projection $\vec{z} = \vec{x}^\ast_K \in \R^m$ onto the coordinates $K$ is defined by
\begin{equation*}
    z_i := \begin{cases}
        x_i &\text{for } i \in K\,, \\
        0 &\text{else.}
    \end{cases}
\end{equation*}
The output of the algorithm will be a vector $\vec{x}^\ast_K$ where $K$ is a set of presumably important coordinates (i.e. coordinates $i$ for which $|x_i|$ is large) that has been identified adaptively. In particular, the algorithm uses information from direct evaluations of the entries $x_i$ for $i \in K$ once the set $K$ has been fixed.
The core of the algorithm is the adaptive identification of the most important coordinate $j \in B$
from a given so-called \emph{bucket} $B \subseteq [m]$, see Section~\ref{sec:shrink}.
In preparation, Section~\ref{sec:hash} studies the process of randomly splitting the domain $[m]$ into buckets such that 
with high probability each bucket contains at most one important coordinate and, moreover, within each such bucket we can identify a heavy coordinate with the required probability.
Only a small portion of buckets will contain an important coordinate,
selecting these relevant buckets is the topic of Section~\ref{sec:select}.
All stages of the algorithm are subject to failure with small probability,
throughout this section we give different names ($\alpha,\alpha_k,\delta_0,\delta_1,\delta_2$)
for these individual failure probabilities
to facilitate the description of the whole algorithm later in Section~\ref{sec:AdaAppInf}.

\subsection{Hashing}
\label{sec:hash}

Hashing is a well established methodology in computer science with use in data storage and retrieval. We are not interested in the specific requirements of such applications and ignore aspects of implementation. Instead, we only use the basic idea of identifying chunks of the index set~$[m]$ by a random hash value. In effect, we split a vector~$\vec{x}$ into sub-vectors, thus reducing the big problem of approximating~$\vec{x}$ into a collection of smaller and easier approximation problems.

Let $D \in \N$ and $\vec{H} = (H_i)_{i=1}^m$ be a family of pairwise independent random variables uniformly distributed on $[D]$, that is, $H_i \sim \Uniform[D]$.
We call $\vec{H}$ a \emph{hash function} or \emph{hash vector}, its entries are called \emph{hash values}.
This defines disjoint buckets
\begin{equation} \label{eq:J_d}
    J_d = J_d^{\vec{H}} := \{i \in [m] \colon H_i = d\} \,, \quad d \in [D]\,.
\end{equation}
For $j \in [m]$ we denote by
\begin{equation*}
    B_j = B_j^{\vec{H}} := J_{H_j}^{\vec{H}} = \{i \in [m] \colon H_i = H_j\}
\end{equation*}
the bucket that contains $j$.
The main benefit from hashing is that for a fixed heavy coordinate $x_j$,
with a proper choice of the parameter $D$,
the entries of $\vec{x}_{B_j \setminus \{j\}}$ are likely to have
small absolute values,
as the following result shows.

\begin{lemma} \label{lem:hash}
    Let $\vec{x} \in \R^m$, $j \in [m]$, $1 \leq p < \infty$, and $\alpha \in (0,1)$.
    Then we have the probabilistic bound
    \begin{equation*}
        \P\left(\|\vec{x}_{B_j \setminus \{j\}}\|_{p}
                > \frac{\lVert \vec{x}_{[m] \setminus \{j\}  } \rVert_p}{(\alpha D)^{1/p}}\right)
            \leq \alpha \,.
    \end{equation*}
\end{lemma}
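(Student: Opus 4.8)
The plan is to reduce the statement to a single first-moment (Markov) bound. First I would rewrite the quantity of interest as a sum over indicators: since $B_j \setminus \{j\} = \{i \in [m] \setminus \{j\} \colon H_i = H_j\}$, we have
\[
    \|\vec{x}_{B_j \setminus \{j\}}\|_p^p
        = \sum_{i \in [m] \setminus \{j\}} |x_i|^p \, \ind[H_i = H_j] \,.
\]
This turns the event in the lemma into a tail event for a non-negative random variable whose expectation can be computed explicitly.

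Next I would compute that expectation. For any fixed $i \neq j$, pairwise independence of $H_i$ and $H_j$ together with $H_i, H_j \sim \Uniform[D]$ gives
\[
    \P(H_i = H_j) = \sum_{d=1}^D \P(H_i = d)\,\P(H_j = d) = \frac{1}{D} \,,
\]
so by linearity of expectation
\[
    \expect \|\vec{x}_{B_j \setminus \{j\}}\|_p^p
        = \frac{1}{D} \sum_{i \in [m] \setminus \{j\}} |x_i|^p
        = \frac{\lVert \vec{x}_{[m] \setminus \{j\}} \rVert_p^p}{D} \,.
\]
Applying Markov's inequality to $\|\vec{x}_{B_j \setminus \{j\}}\|_p^p$ with threshold $\lVert \vec{x}_{[m] \setminus \{j\}} \rVert_p^p / (\alpha D)$ then yields a tail probability of at most $\alpha$, and taking $p$-th roots inside the probability recovers exactly the claimed form. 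The degenerate case $\lVert \vec{x}_{[m] \setminus \{j\}} \rVert_p = 0$ is trivial, since then $\vec{x}_{B_j \setminus \{j\}} = \vec{0}$ surely.

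I do not expect a genuine obstacle here: this is a textbook first-moment argument. The only point deserving a moment's attention is that \emph{pairwise} independence of the hash values — rather than full mutual independence — already suffices, which is the case because the computation of $\expect \|\vec{x}_{B_j \setminus \{j\}}\|_p^p$ only ever uses the joint law of a single pair $(H_i, H_j)$ at a time.
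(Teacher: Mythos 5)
Your proposal is correct and follows the same route as the paper: compute $\expect \|\vec{x}_{B_j \setminus \{j\}}\|_p^p = \frac{1}{D}\|\vec{x}_{[m]\setminus\{j\}}\|_p^p$ via $\P(H_i = H_j) = \frac{1}{D}$ (using only pairwise independence) and then apply Markov's inequality. Your extra remarks on the degenerate case and on why pairwise independence suffices are correct but not needed beyond what the paper already records.
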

\begin{proof}
As in \cite[Lem~3.2]{IPW11}, we start by computing
the $p$-moment of $\|\vec{x}_{B_j}\|_p$
where,
thanks to the pairwise independence of the entries of the hash vector~$\vec{H}$,
for $i \in [m]$ with $i\neq j$ we have $\P(H_i = H_j) = \frac{1}{D}$:
\begin{align*}
    \expect\left[\|\vec{x}_{B_j \setminus \{j\}}\|_{p}^p\right]
        &= \sum_{i \in [m]\setminus\{j\}} \P(H_i = H_j) \cdot |x_i|^p
        = \frac{1}{D} \cdot \|\vec{x}_{[m] \setminus \{j\}}\|_p^p \,.
\end{align*}
The probabilistic bound
then follows from Markov's inequality.
\end{proof}

\begin{remark} \label{rem:Lem1optimal}
    It might be surprising that Markov's inequality in the proof of Lemma~\ref{lem:hash}
    is already the best what we can do and more sophisticated concentration inequalities
    for fully independent entries of the hash vector $\vec{H}$ do not help.
    Indeed, let $r := \lfloor \alpha D \rfloor$ and choose a subset $I \subseteq [m]\setminus\{j\}$ with $\# I = r$. Consider the vector~$\vec{x}$ with non-zero entries
    $x_i= r^{-1/p}$ for $i \in I$, and $x_i = 0$ for $i \in [m] \setminus (I\cup \{j\})$,
    such that $\|\vec{x}_{[m]\setminus\{j\}}\|_p = 1$.
    Assuming $H_i \iid \Uniform[D]$ we find
    \begin{align*}
        \P\left( \|\vec{x}_{B_j\setminus\{j\}}\|_p \geq \frac{\lVert \vec{x}_{[m] \setminus \{j\}  } \rVert_p}{(\alpha D)^{1/p}} \right)
            &\geq \P\left( \|\vec{x}_{B_j\setminus\{j\}}\|_p \geq r^{-1/p} \right) \\
            &= 1 - \prod_{i \in I} \P\bigl( H_i \neq H_j\bigr)
            = 1 - \left(1 - \frac{1}{D}\right)^r \\
            &\approx \frac{r}{D} \approx \alpha
            \qquad \text{for $D \gg r \gg 1$.}
    \end{align*}
    Since Lemma~\ref{lem:hash} must also hold for fully independent hash values,
    it is asymptotically optimal.
\end{remark}

In Section~\ref{sec:shrink} we describe a method that can identify an important coordinate~$j$ of a vector~$\vec{x}$
if it fulfils a so-called \emph{heavy-hitter condition} on its bucket~$B_j$ with an appropriate constant $\gamma > 1$:
\begin{equation} \label{eq:heavy-hitter-cond-general}
    \|\vec{x}_{B_j \setminus\{j\}}\|_2
        \leq \frac{|x_j|}{\gamma} \,.
\end{equation}
The left-hand side of~\eqref{eq:heavy-hitter-cond-general} can be
viewed 
as $\ell_2$-noise hampering the detection of~$x_j$.
The $\ell_2$-norm occurs naturally in the stochastic analysis of random measurements on vectors which causes problems for~$p>2$.
In fact, it turns out that hashing does not really help in the case of~$p>2$,
see Remark~\ref{rem:p>2nobuckets}.
Therefore, from now on all main results are only given for $1 \leq p \leq 2$.
Uniform approximation requires that hashing isolates \emph{all} heavy coordinates~$j$ above a certain threshold $|x_j| \geq \eps > 0$ in the sense of~\eqref{eq:heavy-hitter-cond-general}.

\begin{corollary}\label{cor:hash}
    Let $1 \leq p \leq 2$ and $\vec{x} \in \R^m$ with $\|\vec{x}\|_p \leq 1$.
    Further let $\gamma > 1$, and $\eps,\delta_0 \in (0,1)$.
    Define the set of $\eps$-heavy coordinates
    \begin{equation*}
        I := \left\{j \in [m] \colon |x_j| \geq \eps \right\}
        \qquad\text{where}\quad
        \# I \leq k_0
            := \left\lfloor \eps^{-p} \right\rfloor .
    \end{equation*}
    If we take
    \begin{equation*}
        D := \left\lceil \left(\frac{\gamma 
                    }{\eps}\right)^p \cdot \frac{k_0}{ \delta_0 } \right\rceil
    \end{equation*}
    and draw a hash vector $\vec{H} \in [D]^m$
    with pairwise independent entries $H_j \sim \Uniform[D]$,
    then
    \begin{equation*}
        \P\left(\forall j \in I \colon 
        \left\|\vec{x}_{B_j\setminus\{j\}}\right\|_2
            \leq \frac{|x_j|}{\gamma} \right)
            \geq 1 - \delta_0 \,.
    \end{equation*}
\end{corollary}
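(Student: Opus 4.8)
The plan is to combine Lemma~\ref{lem:hash} with a union bound over the heavy coordinates, using the elementary embedding $\ell_p \embed \ell_2$ to pass from the $\ell_p$-estimate supplied by the lemma to the $\ell_2$-quantity appearing in the claim. As a preliminary remark I would justify the inclusion $\#I \le k_0$: since $\|\vec{x}\|_p \le 1$ and every $j \in I$ contributes $|x_j|^p \ge \eps^p$ to $\|\vec{x}\|_p^p$, summing over $I$ gives $\#I\cdot\eps^p \le \|\vec{x}\|_p^p \le 1$, hence $\#I \le \eps^{-p}$, and being an integer $\#I \le \lfloor\eps^{-p}\rfloor = k_0$.

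Now fix $j \in I$. Since $1 \le p \le 2$, monotonicity of the $\ell_p$-norms gives $\|\vec{x}_{B_j\setminus\{j\}}\|_2 \le \|\vec{x}_{B_j\setminus\{j\}}\|_p$, so it is enough to bound the right-hand side. I apply Lemma~\ref{lem:hash} with $\alpha := (\gamma/\eps)^p/D$; note $\alpha \in (0,1)$ because $D \ge (\gamma/\eps)^p k_0/\delta_0 \ge (\gamma/\eps)^p$ while $\delta_0 < 1 \le k_0$. The lemma then yields, with probability at least $1-\alpha$,
\[
    \|\vec{x}_{B_j\setminus\{j\}}\|_2
        \le \|\vec{x}_{B_j\setminus\{j\}}\|_p
        \le \frac{\|\vec{x}_{[m]\setminus\{j\}}\|_p}{(\alpha D)^{1/p}}
        \le \frac{1}{(\alpha D)^{1/p}}
        = \frac{\eps}{\gamma}
        \le \frac{|x_j|}{\gamma}\,,
\]
where I used $\|\vec{x}_{[m]\setminus\{j\}}\|_p \le \|\vec{x}\|_p \le 1$, the definition of $\alpha$, and finally $|x_j| \ge \eps$ for $j \in I$.

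It remains to take a union bound over the at most $k_0$ indices in $I$: the probability that $\|\vec{x}_{B_j\setminus\{j\}}\|_2 > |x_j|/\gamma$ for at least one $j \in I$ is at most $\#I\cdot\alpha \le k_0(\gamma/\eps)^p/D \le \delta_0$, the last inequality being precisely the defining property of $D = \lceil(\gamma/\eps)^p k_0/\delta_0\rceil$. Passing to the complement gives the assertion. I do not expect a genuine obstacle here; the only points that require a little care are the direction of the norm comparison (this is exactly where $p \le 2$ is used, and, as the surrounding text points out, where the argument would fail for $p > 2$) and choosing the single parameter $\alpha$ via the worst case $|x_j| = \eps$ so that one application of Lemma~\ref{lem:hash} serves uniformly for all of $I$.
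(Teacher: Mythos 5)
Your proof is correct and follows essentially the same route as the paper's: apply Lemma~\ref{lem:hash}, use $\|\cdot\|_2 \le \|\cdot\|_p$ for $p\le 2$, and take a union bound over the at most $k_0$ heavy coordinates. The only (immaterial) difference is the parametrization: you set $\alpha = (\gamma/\eps)^p/D$ so that $(\alpha D)^{1/p} = \gamma/\eps$ exactly and then bound $\alpha \le \delta_0/k_0$, whereas the paper sets $\alpha = \delta_0/k_0$ directly and checks that $D$ is large enough to make $\|\vec{x}\|_p/(\alpha D)^{1/p} \le \eps/\gamma$.
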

\begin{proof}
    The hashing parameter $D$ is chosen such that
    $\|\vec{x}\|_p/(\alpha D)^{1/p} \leq \eps/\gamma$
    with $\alpha = \delta_0/k_0$, hence,
    for a fixed $\eps$-heavy coordinate $j \in I$,
    Lemma~\ref{lem:hash}  implies
    \begin{equation*}
        \P\left(\left\|\vec{x}_{B_j\setminus\{j\}}\right\|_2
                    > \frac{|x_j|}{\gamma}
            \right)
        \leq \P\left(\left\|\vec{x}_{B_j\setminus\{j\}}\right\|_p
                        > \frac{\eps}{\gamma}
                \right)
            \leq \frac{\delta_0 }{k_0} \,,
    \end{equation*}
    where we exploited $\|\cdot\|_2 \leq \|\cdot\|_p$ for norms in $\R^m$ for $p \leq 2$.
    Employing a union bound for all $j \in I$, we arrive at the desired assertion.
\end{proof}

\begin{remark}
    By an example similar to Remark~\ref{rem:Lem1optimal}
    we see that, essentially, Corollary~\ref{cor:hash} cannot be improved.
    Consider a vector with $k_1$ entries of value $\eps := (2k_1)^{-1/p}$
    and $k_2 = \lfloor 2k_1 \cdot (\gamma/2)^p\rfloor$ entries of value $\eps/\gamma$, all other entries put to~$0$.
    This leads to a vector with $\ell_p$-norm at most~$1$.
    For large $D$ and i.i.d.\ hashing, the probability of finding at least one bucket with heavy coordinate $x_j=\eps$ and $\|\vec{x}_{B_j\setminus\{j\}}\|_2 > \eps/\gamma$ will be of asymptotic order $k_1^2 \gamma^p/D \asymp (\gamma/\eps^2)^p / D \asymp \delta_0$
    for $D \gg k_1 \gg 1$
    and $1 \leq p \leq 2$.
\end{remark}


In subsequent sections we do not always need the randomness of the hashing in our arguments
and thus write the lower case $\vec{h} = (h_i)_{i=1}^m$ for a fixed realization of the random vector~$\vec{H}$.
This also means that all random parameters of subordinate routines
are considered independent of $\vec{H}$.

\subsection{Bucket selection}
\label{sec:select}

In the context of stable sparse recovery with $\ell_\infty/\ell_2$ guarantee \cite[Sec~3]{NSWZ18},
an efficient algorithm for simultaneously estimating the $\ell_2$-norms of sub-vectors $\vec{x}_{J_d}$ for a large number of buckets $J_d$ is used.
In the literature on streaming algorithms this method is commonly known as \emph{partition count sketch}, 
see its description in the proof of~\cite[Lem~54]{LNW17}.
We are not that much interested in the individual bucket norm estimates
but rather use them as scores to find the buckets with the largest norm.
The respective routine shall thus be called \emph{bucket selection}.
It returns a set $I_k \subseteq [D]$, with $\# I_k = k$, such that (with high confidence) we have $d \in I_k$ for all buckets $J_d$ that contain a heavy hitter $j \in J_d$ in the sense of
\begin{equation} \label{eq:heavybucket}
    |x_j| \geq \eps \quad\text{and}\quad \|\vec{x}_{J_d \setminus\{j\}}\|_2 \leq \frac{|x_j|}{\gamma} \,,
\end{equation}
where $\gamma > 1$ is a suitable constant and $k$ needs to be tuned for $\eps > 0$.

Assume that we have a \emph{partition} of the domain $[m]$ into buckets $J_d = J_d^{\vec{h}}$
specified by a hash vector $\vec{h} \in [D]^m$.
Fix algorithmic parameters $R,G,k \in \N$, where $R$ is odd.
Generate independent hash vectors $\vec{H}^{(1)},\ldots,\vec{H}^{(R)} \iid \Uniform[G]^D$
and draw Rademacher coefficients $\sigma_{ri} \iid \Uniform\{\pm 1\}$, 
for \mbox{$r \in [R]$} and \mbox{$i \in [m]$}.
We then perform $n_1 := R \cdot G$ random measurements
\begin{equation*} 
    Y_{r,g} = L_{r,g}(\vec{x})
        := \sum_{d \in [D] \colon H^{(r)}_d = g} \, \sum_{i \in J_d} \sigma_{ri} \cdot x_i
    \qquad r \in [R],\; g \in [G].
\end{equation*}
(A collection of linear measurements like this is called a \emph{sketch} of $\vec{x}$.)
That way we execute $R$~repetitions of a grouped measurement where for each repetition the buckets are randomly sorted into $G$ groups and on each group a Rademacher functional is evaluated.
Hence,
for each bucket $J_d$ there are exactly $R$ measurements
\begin{equation*}
    \hat{Y}_{r,d} := Y_{r,H_d^{(r)}}, \qquad r \in [R] \,,
\end{equation*}
that are influenced by $\vec{x}_{J_d}$.
Using these values, for each $d \in [D]$ we compute a bucket score
by taking the median of absolute values,
\begin{equation*}
    Z_d := \median \left\{|\hat{Y}_{r,d}| \colon r \in [R]\right\} ,
\end{equation*}
which is uniquely defined as $R$ is odd.
Based on these scores we find an index set $I_{k} \subseteq [D]$ with cardinality $\# I_{k} = k$ such that
\begin{equation*}
    \min\{Z_d \colon d \in I_{k}\} \geq \max\{Z_d \colon d \in [D] \setminus I_{k}\} \,.
\end{equation*}
This set is the output of the bucket selection algorithm,
\begin{equation} \label{eq:bucketselect}
    \select_{R,G,k}^{\vec{h}}(\vec{x}) := I_{k} \,.
\end{equation}

\begin{lemma} \label{lem:bucketselect}
    Let $1 \leq p \leq 2$ and $\vec{x} \in \R^m$ with $\|\vec{x}\|_p \leq 1$.
    Given a hash vector $\vec{h} \in [D]^m$
    with corresponding buckets $J_d = J_d^{\vec{h}} \subseteq [m]$,
    consider the index set of important buckets that we aim to identify:
    \begin{equation*}
        Q := \left\{d \in [D] \colon \exists j \in J_d \text{ with } |x_j| \geq \eps \text{ and } \|\vec{x}_{J_d \setminus\{j\}}\|_2 \leq \frac{|x_j|}{8\sqrt{2}} \right\} . 
    \end{equation*}
    If for $\delta_1 \in (0,1)$
    we take an odd integer
    $R \geq 2 \log_2 \frac{D}{2\delta_1}$
    and
    \begin{equation*}
        k := \left\lfloor \left(\frac{8\sqrt{2} 
                                    }{\eps}\right)^p \right\rfloor,
        \qquad
        G := 4k \,,
    \end{equation*}
    then we have
    \begin{equation*}
        \P\left(Q \subseteq \select_{R,G,k}^{\vec{h}}(\vec{x})\right)
            \geq 1 - \delta_1 \,. 
    \end{equation*}
\end{lemma}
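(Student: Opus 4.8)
The plan is to fix a threshold $\tau$ of order $\eps$ and reduce the claim to two high‑probability events:
\begin{enumerate}[(a)]
\item $Z_d \ge \tau$ for every $d \in Q$;
\item $\#\{d \in [D] \colon Z_d \ge \tau\} \le k$.
\end{enumerate}
Granting both, the conclusion is purely combinatorial. Put $A := \{d \colon Z_d \ge \tau\}$, so $Q \subseteq A$ by (a) and $\#A \le k$ by (b). If some $d^\ast \in Q$ were missing from the output set $I_k = \select_{R,G,k}^{\vec h}(\vec x)$, then $Z_{d^\ast} \le \max_{d\notin I_k}Z_d \le \min_{d\in I_k} Z_d$; since $d^\ast\in Q\subseteq A$ this forces $Z_d\ge\tau$ for all $d\in I_k$, i.e.\ $I_k\subseteq A$, whence $\#A \ge \#(I_k\cup\{d^\ast\}) = k+1$, contradicting (b). Hence $Q\subseteq I_k$.

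Both (a) and (b) follow from the usual median amplification once one has good per‑repetition estimates, so fix $r$ and $d$. Conditionally on the second‑level hash $\vec H^{(r)}$, which determines the set $C_{r,d}$ of buckets colliding with $d$ in its group, the measurement $\hat Y_{r,d}$ is a Rademacher sum in the coefficients $(x_i)_{i \in J_d \cup \bigcup_{d'\in C_{r,d}} J_{d'}}$, independent of the sign $\sigma_{rj}$ of the heavy hitter when $d\in Q$. Thus sub‑Gaussian (Hoeffding) tail bounds apply with squared $\ell_2$‑norm $\|\vec x_{J_d}\|_2^2 + M_{r,d}$, where the collision mass is $M_{r,d} := \sum_{d'\in C_{r,d}} \|\vec x_{J_{d'}}\|_2^2$; and for $d\in Q$, writing $\hat Y_{r,d} = \sigma_{rj} x_j + R_{r,d}$ gives $|\hat Y_{r,d}| \ge |x_j| - |R_{r,d}|$ with $R_{r,d}$ a Rademacher sum of squared $\ell_2$‑norm $\|\vec x_{J_d\setminus\{j\}}\|_2^2 + M_{r,d} \le |x_j|^2/128 + M_{r,d}$. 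Everything therefore reduces to showing that $M_{r,d}$ is of order $\eps^2$ with high conditional probability.

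To see this I would split $[D]$ into the heavy buckets $\mathcal B := \{d' \colon \|\vec x_{J_{d'}}\|_p \ge \eps/(8\sqrt2)\}$ and the rest. Since $\sum_{d'} \|\vec x_{J_{d'}}\|_p^p = \|\vec x\|_p^p \le 1$, there are at most $(8\sqrt2/\eps)^p$ heavy buckets, hence — being an integer — at most $k$, and moreover $Q\subseteq\mathcal B$; so (b) will follow once we prove $Z_d<\tau$ for non‑heavy $d$. With $G=4k$ the expected number of heavy buckets colliding with $d$ is at most $\#\mathcal B/G\le 1/4$, and one refines this by observing that only collisions with buckets whose $\ell_2$‑norm is comparable to $\eps$ are harmful — and those are far fewer than $k$, again by $\|\cdot\|_2\le\|\cdot\|_p$ and the $\ell_p$‑count — so a dyadic split of the non‑heavy buckets by $\ell_2$‑norm, together with the key bound
$\sum_{d'\notin\mathcal B}\|\vec x_{J_{d'}}\|_2^2 \le (\eps/(8\sqrt2))^{2-p}\sum_{d'}\|\vec x_{J_{d'}}\|_p^p \le (\eps/(8\sqrt2))^{2-p}$,
keeps the expected non‑heavy contribution to $M_{r,d}$ of order $(\eps/(8\sqrt2))^{2-p}/G \le \eps^2/256$. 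This last inequality — the total $\ell_2$‑mass of the light buckets is of order $\eps^{2-p}$ rather than of order $1$ — is exactly what makes the whole range $1\le p\le 2$ work rather than only $p=2$, since otherwise $1/G\asymp\eps^p\gg\eps^2$ would swamp the signal. A Bernstein estimate ($M_{r,d}$ is a sum of independent non‑negative terms, each at most $(\eps/(8\sqrt2))^2$) then shows $M_{r,d}$ is of order $\eps^2$ except with small probability. Feeding this into the sub‑Gaussian bounds: for $d\in Q$ the signal $|x_j|\ge\eps$ dominates a noise of $\ell_2$‑size of order $\eps$, so $|\hat Y_{r,d}|\ge\tau$ with per‑repetition probability comfortably above $\tfrac12$; for $d\notin\mathcal B$, since $\|\vec x_{J_d}\|_2<\eps/(8\sqrt2)$ and again $M_{r,d}$ is of order $\eps^2$, $|\hat Y_{r,d}|<\tau$ with per‑repetition probability comfortably above $\tfrac12$. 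The median over $R\ge 2\log_2\frac{D}{2\delta_1}$ independent repetitions then fails with probability at most $2^{-\Theta(R)}\le 2\delta_1/D$ for each fixed bucket, and a union bound over the $\le D$ buckets relevant to (a) and the $\le D$ relevant to (b) gives total failure probability at most $\delta_1$.

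The technical heart, and the step I expect to be most delicate, is making those two per‑repetition probabilities sit strictly below $\tfrac12$ with enough slack that the median amplification with the prescribed $R$ closes: this is where the careful (essentially dyadic) control of the out‑of‑bucket noise above is needed, and it is here that the choices $G=4k$ and the constant $8\sqrt2$ in the heavy‑hitter condition, and hence the admissible range of $\tau$, get pinned down. For $p=2$ all of this collapses, since then $1/G\asymp\eps^2$ and a single Chebyshev estimate on $M_{r,d}$ already suffices.
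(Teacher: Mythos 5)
Your proposal is correct in outline and follows the same architecture as the paper's proof: a per-bucket, per-repetition tail estimate for $\hat Y_{r,d}$ around a threshold $\tau=\eps/2$, control of the cross-bucket collision noise by excluding a small set of heavy buckets and bounding the residual $\ell_2$ mass via the $\ell_p$ constraint, median amplification over the $R$ repetitions, a union bound over $D$ buckets, and the counting argument that at most $k$ buckets can score high. The one place where your route genuinely diverges is the noise control, and it is also the one place where your sketch, taken literally, does not yet close. You exclude \emph{all} buckets with $\|\vec x_{J_{d'}}\|_p\ge\eps/(8\sqrt2)$, of which there may be as many as $k$, so the probability that $d$'s group collides with this set is only bounded by $k/G=1/4$ -- too large to feed into the median bound with the prescribed $R$ (for per-repetition failure probability $q$, one needs roughly $4q(1-q)<1/2$, and $q=1/4$ plus the Chebyshev/Hoeffding failure already exhausts the budget). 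You recognize this and propose a dyadic refinement by $\ell_2$-norm plus a Bernstein estimate on the collision mass $M_{r,d}$; that can be made to work, but it is exactly the delicate step. The paper sidesteps it entirely by excluding only the $\hat k=\lfloor k/4\rfloor$ buckets of \emph{largest} $\ell_2$-norm (collision probability $\le\hat k/G\le 1/16$ immediately) and controlling the remaining total mass $\sum_{d'\notin\hat Q}\|\vec x_{J_{d'}}\|_2^2\le 4k^{1-2/p}$ by the Stechkin best-$k$-term inequality; a single Chebyshev estimate on the conditional second moment then gives per-repetition failure probability $1/16+1/16=1/8$ for both the high- and low-score events. Your interpolation bound $\sum_{\text{light}}\|\vec x_{J_{d'}}\|_2^2\le(\eps/(8\sqrt2))^{2-p}$ is the same phenomenon as the Stechkin bound and correctly identifies why the whole range $1\le p\le 2$ works; if you adopt the paper's choice of excluded set you can drop the dyadic refinement and the Bernstein step altogether, and plain Chebyshev suffices in place of Hoeffding.
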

\begin{proof}
    Define $\hat{k} := \lfloor k / 4 \rfloor$ and
    let $\hat{Q} \subset [D]$ be a (not necessarily unique) $\hat{k}$-element index set of the $\hat{k}$~buckets with the largest $\ell_2$-norm, i.e.
    \begin{equation*}
        \# \hat{Q} = \hat{k} \qquad\text{and}\qquad
        \min_{d \in \hat{Q}} \|\vec{x}_{J_d}\|_2
            \geq \max_{d \in [D]\setminus \hat{Q}} \|\vec{x}_{J_d}\|_2 \,.
    \end{equation*}
    A well-known result on best $k$-term approximation, see e.g.~\cite{CDD09}, applied to the vector $(\|\vec{x}_{J_d}\|_2)_{d\in[D]}$ of bucket $\ell_2$-norms, provides the estimate
    \begin{equation} \label{eq:best-k-bucket}
        \sqrt{\sum_{d \in [D]\setminus \hat{Q}} \|\vec{x}_{J_d}\|_{2}^2 }
            \leq \left(\hat{k}+1\right)^{-\left(\frac{1}{p} - \frac{1}{2}\right)} 
                    \underbrace{\left(\sum_{d=1}^{D} \|\vec{x}_{J_d}\|_{2}^p \right)^{\frac{1}{p}}}_{
                        \leq \|\vec{x}\|_p \leq 1
                    }
            \leq 2 k^{-\left(\frac{1}{p} - \frac{1}{2}\right)} .
    \end{equation}
    For a fixed $d \in [D]$, the measurement $\hat{Y}_{r,d}$
    can be viewed as a sum of uncorrelated random variables $\ind\left[H_{h_i}^{(r)} = H_d^{(r)}\right] \cdot \sigma_{ri} \cdot x_i$, for $i \in [m]$.
    The indicator functions define the set of all companion buckets with which $J_d$ is grouped together in the $r$-th repetition:
    \begin{equation*}
        C_d^{(r)} := \left\{d' \in [D]\setminus\{d\} \colon H_{d'}^{(r)} = H_d^{(r)} \right\} .
    \end{equation*}
    Membership random variables $\ind[d' \in C_d^{(r)}]$, $d' \in [D] \setminus \{d\}$, are i.i.d.\ Bernoulli random variables with success probability $\frac{1}{G}$.
    A union bound
    for $d' \in \hat{Q} \setminus \{d\}$
    thus gives
    \begin{equation} \label{eq:hit-S_k} 
        \P\left(\hat{Q} \cap C_d^{(r)} \neq \emptyset
            \right)
            \leq \frac{\hat{k}}{G} \leq \frac{1}{16}
    \end{equation}
    where we used that $\#\hat{Q} \setminus\{d\} \leq\hat{k}$.
    Conditioning on the complementary event, we find the following second moment:
    \begin{align}
        \expect \left[\hat{Y}_{r,d}^2 \;\middle|\; \hat{Q} \cap C_d^{(r)} = \emptyset\right]
            &= \sum_{i=1}^m \P\left(h_i \in C_d^{(r)} \cup \{d\} \right) \cdot \underbrace{\sigma_{ri}^2}_{=1} \cdot \, x_i^2 
                \nonumber\\
            &= \|\vec{x}_{J_d}\|_2^2
            + \frac{1}{G} \sum_{d \in [D]\setminus \hat{Q}} \|\vec{x}_{J_d}\|_{2}^2 
                \label{eq:sketchmoment}\\
            &\stackrel{\eqref{eq:best-k-bucket}}{\leq} \|\vec{x}_{J_d}\|_2^2
                + \frac{4 k^{1-2/p}}{G} 
                \nonumber\\
            &= \|\vec{x}_{J_d}\|_2^2 + k^{-2/p}
                \nonumber\\
            &\leq \|\vec{x}_{J_d}\|_2^2
                + \frac{\eps^2}{128}  
                \,. 
                \nonumber
    \end{align}
    If $\|\vec{x}_{J_d}\|_2 \leq \frac{\eps}{8\sqrt{2}}$,
    then this conditional expectation is bounded by $\eps^2/64$ and
    Chebyshev's inequality gives
    \begin{equation*}
        \P\left(|\hat{Y}_{r,d}| \geq \frac{\eps}{2} \;\middle|\;
                \hat{Q} \cap C_d^{(r)} = \emptyset \right)
            \leq \frac{\eps^2}{64} \cdot \frac{4}{\eps^2}
            = \frac{1}{16} \,.
    \end{equation*}
    Together with \eqref{eq:hit-S_k} we find
    \begin{align*}
       \P\left(|\hat{Y}_{r,d}| \geq \frac{\eps}{2} \right)
       & = \P\left( |\hat{Y}_{r,d}| \geq \frac{\eps}{2} 
                    \;\middle|\;  \hat{Q} \cap C_d^{(r)} = \emptyset
            \right)
                \cdot\P\left( \hat{Q} \cap C_d^{(r)} = \emptyset \right) 
       \\
       & \qquad+\P\left( |\hat{Y}_{r,d}| \geq \frac{\eps}{2}
                        \;\middle|\;  \hat{Q} \cap C_d^{(r)} \neq \emptyset
                \right)
                    \cdot \P\left( \hat{Q} \cap C_d^{(r)} \neq \emptyset \right)
             \\
        &\leq \frac{1}{16}\cdot 1 + 1\cdot \frac{1}{16} =\frac{1}{8} \,.
    \end{align*}
    Based on this probability,
    a well known estimate on the median of $R$ independent repetitions, see e.g.~\cite[eq~(2.6)]{NP09}, gives
    \begin{equation*}
        \P\left(Z_d \geq \frac{\eps}{2}\right) < 2^{-(1+R/2)} \,.
    \end{equation*}

    In contrast, 
    assume that $d \in Q$ with heavy hitter $j \in J_d$.
    Similarly to~\eqref{eq:sketchmoment}, we estimate
    \begin{equation*}
        \expect \left[(\hat{Y}_{r,d} - \sigma_{rj} x_j)^2 \;\middle|\;
                \hat{Q} \cap C_d^{(r)} = \emptyset \right]
            \leq \|\vec{x}_{J_d \setminus\{j\}}\|_2^2 + \frac{\eps^2}{128}
            \leq \frac{|x_j|^2}{64} \,.
    \end{equation*}
    By Chebyshev's inequality we have
    \begin{equation*}
        \P\left(|\hat{Y}_{r,d} - \sigma_{rj} x_j| \geq \frac{|x_j|}{2} \;\middle|\;
                \hat{Q} \cap C_d^{(r)} = \emptyset \right)
            \leq \frac{|x_j|^2}{64} \cdot \frac{4}{|x_j|^2} =
            \frac{1}{16} \,,
    \end{equation*}
    and together with \eqref{eq:hit-S_k} we find
    \begin{align*}
        \P\left(|\hat{Y}_{r,d}| \leq \frac{\eps}{2} \right)
            \leq \P\left(|\hat{Y}_{r,d}| \leq \frac{|x_j|}{2} \right)
            \leq \P\left(|\hat{Y}_{r,d} - \sigma_{rj} x_j| \geq \frac{|x_j|}{2}\right)
            \leq \frac{1}{8} \,.
    \end{align*}
    The application of the median, again, yields
    \begin{equation*}
        \P\left(Z_d \leq \frac{\eps}{2}\right) < 2^{-(1+R/2)} \,.
    \end{equation*}

    We did not prove guarantees for buckets $d \notin Q$ with large noise $\|\vec{x}_{J_d}\|_2 > \frac{\eps}{8\sqrt{2}}$.
    However, there exist at most 
    $k = \left\lfloor \left(\frac{8\sqrt{2}}{\eps}\right)^p \right\rfloor$
    buckets with $\|\vec{x}_{J_d}\|_2 \geq \frac{\eps}{8\sqrt{2}}$, including all important buckets $d \in Q$, because
    \begin{equation*}
        1 = \|\vec{x}\|_{p}^p
            = \sum_{d=1}^{D} \|\vec{x}_{J_d}\|_{p}^p
            \geq \sum_{d=1}^{D} \|\vec{x}_{J_d}\|_{2}^p
            \geq \#\left\{d \in [D] \colon \|\vec{x}_{J_d}\|_{2} \geq \frac{\eps}{8\sqrt{2}}\right\}
                \cdot \left(\frac{\eps}{8\sqrt{2}}\right)^p .
    \end{equation*}
    Hence, if all bucket scores $Z_d$ are successful in the sense that for $\|\vec{x}_{J_d}\|_2 \leq \frac{\eps}{8\sqrt{2}}$ we have $Z_d < \frac{\eps}{2}$,
    and for heavy buckets $J_d$ with $d \in Q$
    we have $Z_d > \frac{\eps}{2}$, then 
    $Q \subseteq I_k$.
    The probability of the complementary event can be bounded
    by a union bound over the failure probabilities for up to $D$ buckets,
    namely,
    \begin{equation*}
        \P(Q \not\subseteq I_k)
            \leq \sum_{d \colon \left\|\vec{x}_{J_d}\right\|_2 \leq \frac{\eps}{8\sqrt{2}}
                    } \P\left(Z_d \geq \frac{\eps}{2}\right)
                + \sum_{d \in Q} \P\left(Z_d \leq \frac{\eps}{2}\right)
            \leq D \cdot 2^{-(1+R/2)} \leq \delta_1
    \end{equation*}
    by choice of $R$.
\end{proof}

\begin{remark} \label{rem:largeD}
    The hashing parameter $D$ will be chosen depending on the desired precision,
    oblivious of the problem size~$m$.
    If we end up with $D \geq m$,
    we can actually take the trivial hashing $\vec{h} = (1,\ldots,m)$ with $D = m$
    and $\select$ will identify all $\eps$-heavy coordinates in a non-adaptive manner.
    If, subsequently, these are observed directly,
    the entire approximation is technically still adaptive.
    The real power of adaptivity, however, does only unfold
    if buckets still contain many elements, see Section~\ref{sec:shrink}.
    In fact, an output $\wt{\vec{Z}}$ with entries
    \begin{equation*}
        \wt{Z}_i := \median \bigl\{ \sigma_{ri} \hat{Y}_{r,i} \colon r \in [R] \bigr\}
    \end{equation*}
    will already be a non-adaptive and unbiased approximation of $\vec{x}$
    which is precisely the idea of 
    the \emph{count sketch} algorithm~\cite{CCF04}.
\end{remark}

\begin{remark}[$p>2$] \label{rem:p>2nobuckets}
    Forming buckets by hashing and identifying the largest ones does not really help in the case of~$p>2$.
    Consider a vector with $k_1$ entries of value $\eps := (2k_1)^{-1/p}$
    and all other entries put to $(2m)^{-1/p}$, giving $\|\vec{x}\|_p \leq 1$
    but $\|\vec{x}\|_2 \geq 2^{-1/p} \cdot m^{\frac{1}{2} - \frac{1}{p}}$.
    The expected $\ell_2$-noise on an individual bucket is
    \mbox{$\expect \|\vec{x}_{B_j \setminus\{j\}}\|_2 \asymp m^{\frac{1}{2} - \frac{1}{p}} \cdot D^{-\frac{1}{2}}$} for $k_1 \ll D\leq m$,
    and for fully independent hashing it concentrates around the expectation.
    When aiming for bucket noise smaller than~$\eps/\gamma$,
    we would thus need to pick a hashing parameter $D \succeq m^{1-\frac{2}{p}} \cdot (\gamma/\eps)^2$, so the repetition parameter of Lemma~\ref{lem:bucketselect}
    would be $R \succeq \log D \succeq \log \eps^{-1} + \log m$.
    Even worse, when looking at the variance of a measurement $\hat{Y}_{r,d}$, see \eqref{eq:sketchmoment} in the proof of Lemma~\ref{lem:bucketselect},
    we get
    \begin{equation*}
        \expect \hat{Y}_{r,d}^2 \succeq \frac{\|\vec{x}\|_2^2}{G} \,.
    \end{equation*}
    For this to be smaller or equal $\eps^2$ with high probability,
    $G \succeq m^{1 - \frac{2}{p}} \cdot \eps^{-2}$ is needed.
    Hence, the cost for bucket selection alone is
    \begin{equation*}
        R \cdot G
            \succeq m^{1 - \frac{2}{p}} \cdot \left(\log \eps^{-1} + \log m\right) \cdot \eps^{-2} \,.
    \end{equation*}
    This is already the cost for known linear randomized approximation methods,
    see Remark~\ref{rem:nonada_p>2}.
\end{remark}

\subsection{Spotting a single heavy hitter by accelerated shrinking}
\label{sec:shrink}

This section is a review of~\cite[Sec~3.1]{IPW11}
with slight variations of the two levels
of the method given in \cite[Alg~3.1]{IPW11}.
Namely, at its fundamental level we study a shrinkage step $\shrink_{\vec{h}}$ which reduces a given candidate set with respect to a hashing~$\vec{h}$.
Combining shrinking steps in a sequential way, the adaptive one-sparse recovery scheme $\spot_{\alpha}$ is defined.

We start with the description of the shrinkage step.
Assume that for a vector $\vec{x} \in \R^m$ we are given a candidate set $S_0 \subseteq [m]$
for which we suspect that it contains an important coordinate.
Let $\vec{h} = (h_i)_{i =1}^m \in [D_0]^m$ be a fixed hash vector
with a new hashing parameter $D_0 \in \N$ further splitting up $S_0$.
The algorithm takes two random measurements
\begin{align}
    Y_1 &= L_1(\vec{x}) := \sum_{i \in S_0} \sigma_i x_i \,, \nonumber\\
    Y_2 &= L_2(\vec{x}) := \sum_{i \in S_0} \left(h_i - \frac{D_0+1}{2}\right) \sigma_i x_i \,,
        \label{eq:ShrinkInfo}
\end{align}
where $\sigma_i \sim \Uniform\{\pm 1\}$ are pairwise independent Rademacher coefficients.
The output is defined as the set
\begin{equation*}
    \shrink_{\vec{h}}(\vec{x},S_0) := \left\{i \in S_0 \colon h_i = \left\lceil \frac{Y_2}{Y_1} + \frac{D_0}{2} \right\rceil\right\} \subseteq S_0 \,.
\end{equation*}
In case of $Y_1 = 0$ we consider the set to be empty and the instance of the algorithm a failure.
The following result is based on~\cite[Lem~3.2]{IPW11}.
It provides a probabilistic guarantee that the algorithm $\shrink_{\vec{h}}$
yields a subset of $S_0$ that still contains the heavy hitter,
which also includes the prerequisite $Y_1 \neq 0$.

\begin{lemma}\label{lem:Shrink}
    Let $\alpha_0 \in (0,1)$, and let $\vec{h} = (h_i)_{i=1}^m$ be a hash vector with values $h_i \in [D_0]$ where $D_0 \in \N$.
    For $\vec{x} \in \R^m$ and $j \in S_0 \subseteq [m]$ assume the heavy hitter condition
    \begin{equation}\label{Eq:HHSpot}
        \|\vec{x}_{S_0 \setminus\{j\}}\|_2 \leq \frac{|x_j| \sqrt{\alpha_0}}{\sqrt{2} \, (2D_0 - 1)} \,.
    \end{equation}
    Then
    \begin{equation*}
        \P\bigl(j \in \shrink_{\vec{h}}(\vec{x},S_0)\bigr) \geq 1 - \alpha_0 \,.
    \end{equation*}
\end{lemma}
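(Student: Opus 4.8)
The plan is to extract the contribution of the heavy coordinate~$j$ from the two measurements~\eqref{eq:ShrinkInfo} and to show that $Y_2/Y_1$ reproduces the hash value~$h_j$ up to a rounding error that is harmless unless an $\ell_2$-noise term is too large. Write $E_1 := \sum_{i \in S_0 \setminus \{j\}} \sigma_i x_i$, so that $Y_1 = \sigma_j x_j + E_1$, and set $N := \sum_{i \in S_0 \setminus \{j\}} (h_i - h_j)\,\sigma_i x_i$. A one-line computation from~\eqref{eq:ShrinkInfo} gives $Y_2 - (h_j - \frac{D_0+1}{2})\,Y_1 = N$, since the weight $h_i - \frac{D_0+1}{2}$ of the $i$-th term differs from $h_j - \frac{D_0+1}{2}$ by exactly $h_i - h_j$ and the $j$-th term drops out. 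Consequently, whenever $Y_1 \neq 0$,
\[
    \frac{Y_2}{Y_1} + \frac{D_0}{2} \;=\; h_j - \frac12 + \frac{N}{Y_1}\,,
\]
so $j \in \shrink_{\vec{h}}(\vec{x},S_0)$ as soon as the last summand has modulus strictly below~$\frac12$.

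Next I would pin down a clean sufficient event. If $|E_1| + 2|N| < |x_j|$, then $|Y_1| \geq |x_j| - |E_1| > 2|N| \geq 0$; in particular $Y_1 \neq 0$ (so the failure case $Y_1 = 0$ is excluded) and $|N/Y_1| < \frac12$, whence $h_j - \frac12 + N/Y_1 \in (h_j - 1, h_j)$ and its ceiling equals~$h_j$. Thus $\{|E_1| + 2|N| < |x_j|\} \subseteq \{j \in \shrink_{\vec{h}}(\vec{x},S_0)\}$, and it remains to bound $\P(|E_1| + 2|N| \geq |x_j|)$.

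The decisive step is to fold the two noise terms into a single second moment: by Cauchy--Schwarz applied to the pair $(E_1, 2N)$ we have $|E_1| + 2|N| \leq \sqrt{2}\,(E_1^2 + 4N^2)^{1/2}$, so Markov's inequality yields
\[
    \P\bigl(|E_1| + 2|N| \geq |x_j|\bigr)
        \;\leq\; \P\Bigl(E_1^2 + 4N^2 \geq \tfrac{1}{2}|x_j|^2\Bigr)
        \;\leq\; \frac{2\,\expect\!\left[E_1^2 + 4N^2\right]}{|x_j|^2}\,.
\]
Pairwise independence of the Rademacher signs gives $\expect E_1^2 = \|\vec{x}_{S_0 \setminus \{j\}}\|_2^2$, and since $h_i, h_j \in [D_0]$ forces $|h_i - h_j| \leq D_0 - 1$ we get $\expect N^2 \leq (D_0-1)^2\,\|\vec{x}_{S_0 \setminus \{j\}}\|_2^2$. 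Hence $\expect[E_1^2 + 4N^2] \leq (1 + 4(D_0-1)^2)\,\|\vec{x}_{S_0 \setminus \{j\}}\|_2^2$, and inserting the heavy-hitter hypothesis~\eqref{Eq:HHSpot} gives
\[
    \P\bigl(j \notin \shrink_{\vec{h}}(\vec{x},S_0)\bigr)
        \;\leq\; \frac{\bigl(1 + 4(D_0-1)^2\bigr)\,\alpha_0}{(2D_0-1)^2}
        \;\leq\; \alpha_0\,,
\]
where the final inequality uses $1 + 4(D_0-1)^2 \leq (2D_0-1)^2$, which holds for every integer $D_0 \geq 1$ since the difference of the two sides equals $4(D_0-1)$.

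The computation is short, and there is no serious obstacle; the two points that need a little care are the rounding argument that converts the membership $j \in \shrink_{\vec{h}}(\vec{x},S_0)$ into the inequality $|N/Y_1| < \frac12$, and the Cauchy--Schwarz weighting $(E_1, 2N)$, chosen precisely so that the resulting constant $1 + 4(D_0-1)^2$ stays below $(2D_0-1)^2$ and thereby matches the threshold in~\eqref{Eq:HHSpot}.
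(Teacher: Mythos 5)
Your proof is correct, and it rests on the same core idea as the paper's: the ratio $Y_2/Y_1$ recovers $h_j - \frac{D_0+1}{2}$ up to a deviation controlled by second moments of the off-$j$ noise, with pairwise independence of the signs sufficing throughout. The difference is in the bookkeeping. The paper normalizes by $\sigma_j x_j$, introduces $T_1 = Y_1/(\sigma_j x_j)-1$ and $T_2 = Y_2/(\sigma_j x_j)-a$, bounds $|Y_2/Y_1 - a| \le (|T_2|+b|T_1|)/(1-|T_1|)$, and applies Chebyshev to $T_1$ and $T_2$ separately followed by a union bound (factor $2$). You instead use the exact identity $Y_2 - \bigl(h_j-\frac{D_0+1}{2}\bigr)Y_1 = N$, reduce success to the single deterministic event $|E_1|+2|N|<|x_j|$, and convert it by Cauchy--Schwarz into one Markov bound on the quadratic form $E_1^2+4N^2$ (the factor $2$ now coming from Cauchy--Schwarz). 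Both routes land on the same threshold; your constant $1+4(D_0-1)^2 \le (2D_0-1)^2$ is in fact marginally sharper than the paper's $2(2D_0-1)^2$ from the union bound is loose. Your argument is also slightly cleaner in that it handles the failure case $Y_1=0$ explicitly inside the sufficient event, and the rounding step (ceiling of a point in $(h_j-1,h_j)$ equals $h_j$) is carried out correctly. No gaps.
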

\begin{proof}
    The idea is that $Y_1$ and $Y_2$ together give approximate information on the hash-value $h_j$ of the heavy entry,
    namely $Y_1 \approx \sigma_j x_j$ and $Y_2 \approx \sigma_j \left(h_j - \frac{D_0+1}{2}\right) x_j$,
    hence $\frac{Y_2}{Y_1} \approx h_j - \frac{D_0+1}{2} =: a$.
    `Approximately equal' means $\frac{Y_2}{Y_1} - a \in \left(- \frac{1}{2}, \frac{1}{2}\right]$
    because in this case we have $\left\lceil \frac{Y_2}{Y_1} + \frac{D_0}{2} \right\rceil = h_j$
    which implies $j \in \shrink_{\vec{h}}(\vec{x},S_0)$.
    We define normalized deviations,
    \begin{align*}
        T_1 &:= \frac{Y_1}{\sigma_j x_j} - 1 = \frac{1}{\sigma_j x_j} \sum_{i \in S_0 \setminus\{j\}} \sigma_i x_i \,, \\
        T_2 &:= \frac{Y_2}{\sigma_j x_j} - a = \frac{1}{\sigma_j x_j} \sum_{i \in S_0 \setminus\{j\}}
            \underbrace{\left(h_i - \frac{D_0+1}{2}\right)}_{|\cdot| \,\leq\, (D_0-1)/2 \,=:\, b} \sigma_i x_i \,.
    \end{align*}
    On $\{|T_1| < 1\}$, using $|a| \leq \frac{D_0 - 1}{2} = b$, 
    we obtain the following upper bound for the deviation:
    \begin{equation*}
        \left|\frac{Y_2}{Y_1} - a\right|
            = \left|\frac{a + T_2}{1 + T_1} - a\right|
            = \left|\frac{T_2 - a T_1}{1 + T_1}\right|
            \leq \frac{|T_2| + |a T_1|}{1 - |T_1|}
            \leq \frac{|T_2| + b|T_1|}{1 - |T_1|}
            \,.
    \end{equation*}
    If $|T_1| < \epsilon := \frac{1}{1+4b} = \frac{1}{2D_0 - 1}$ and $|T_2| < b \epsilon$, we have
    \begin{equation*}
        \left|\frac{Y_2}{Y_1} - a\right|
            < \frac{2b \epsilon}{1-\epsilon}
            = \frac{1}{2} \,,
    \end{equation*}
    leading to the correct identification of $h_j$.

    We show that with probability at least $1-\alpha_0$
    these conditions on $T_1$ and $T_2$
    are met.
    With $\expect T_1^2 = \|\vec{x}\|_{S_0 \setminus \{j\}}^2/x_j^2$ and $\expect T_2^2 \leq b^2 \expect T_1^2$,
    Chebyshev's inequality yields 
    \begin{align*}
        \P (|T_1| \geq \epsilon) 
            &\leq \frac{\lVert \vec{x}_{S_0 \setminus \{j\}}\rVert_2^2
                        }{x_j^2\epsilon^2}
            = \left( 2D_0 - 1 \right)^2 
                \frac{\lVert \vec{x}_{S_0 \setminus \{j\}}\rVert_2^2}{x_j^2} \,, \\
        \P (|T_2| \geq b\epsilon) 
            &\leq \left( 2D_0 - 1 \right)^2 
                \frac{\lVert \vec{x}_{S_0 \setminus \{j\}}\rVert_2^2}{x_j^2} \,.
    \end{align*}
    A union bound gives
    \begin{equation*}
        \P (|T_1| \geq \epsilon \lor |T_2| \geq b \epsilon)
            \leq \P (|T_1| \geq \epsilon) + \P (|T_2| \geq b \epsilon)
         \leq 2\left( 2D_0 - 1 \right)^2 
            \frac{\lVert \vec{x}_{S_0 \setminus \{j\}}\rVert_2^2}{x_j^2} \,.
    \end{equation*}
    It is now easily checked that under the heavy hitter condition (\ref{Eq:HHSpot}) we have
    \begin{equation*}
        \P (|T_1| \geq \epsilon \lor |T_2| \geq b \epsilon) \leq \alpha_0.
    \end{equation*}
    This bounds the probability for misclassifying the hash value $h_j$.
\end{proof}

We generalize the second part of \cite[Alg~3.1]{IPW11} and its analysis \cite[Lem~3.3]{IPW11},
namely, we design the method for general uncertainty levels $\alpha$ without changing the overall structure of the algorithm.
Our approach thus differs from other improvements of the original one-sparse recovery
like the one in \cite[Lem~13]{NSWZ18}.
It is similar to the approach in~\cite[Thm~50]{LNW17}
but it requires a heavy hitter condition~\eqref{eq:heavy-hitter-cond-general}
with a larger, $\alpha$-dependent constant~$\gamma$ to start with
because we do not make use of a preconditioning step~\cite[Lem~49]{LNW17}.
It is possible to significantly reduce constants by employing preconditioning, but the asymptotic complexity and error rates would remain unchanged.
For the sake of simplicity we do not discuss this algorithmic feature.

By iterating the shrinkage algorithm, the method $\spot_\alpha$ will produce a nested sequence of sets $J = S_0 \supseteq S_1 \supseteq \ldots$ until we find a set $S_k$ that contains just one element (or is empty in case of failure).
This final set $S_k$ will then
be the output of $\spot_\alpha(\vec{x},J)$.
In any event, for $m \geq 2$, it will suffice to iterate the shrinkage algorithm at most
\begin{equation} \label{eq:k*}
    k^\ast = k^\ast(m) := 
        \max\left\{0, \left\lceil \log_{\frac{9}{8}} \frac{\log_2 m}{8} \right\rceil \right\}
\end{equation}
times with random hashings before identifying the most important coordinate \mbox{$j \in J$} with a final application of $\shrink_{\vec{h}^\ast}$ where we use a deterministic hash function $\vec{h}^\ast$
that is injective on the remaining candidate set $S_{k^\ast}$, compare \cite[Lem~3.1]{IPW11}.
This method is guaranteed to work with probability $1-\alpha \in (0,1)$
provided $j$ is a heavy hitter in the sense required by the lemma below.
For $k=0,\ldots,k^\ast - 1$ we use a sequence of independent hash vectors $\vec{H}^{(k)} = (H_i^{(k)})_{i \in J}$. 
For each $k$ the entries of $\vec{H}^{(k)}$ are pairwise independent with $H_i^{(k)} \sim \Uniform[D_k]$ and
\begin{equation} \label{eq:D_k}
    D_k = D_k(\alpha) := \left\lceil 2^{8 \cdot (9/8)^{k} + k + 2} \alpha^{-1} \right\rceil
        > 2^{8 \cdot (9/8)^k}
    \,.
\end{equation}
As long as $\#S_k > 1$, we perform the iteration
\begin{equation} \label{eq:spot_step_k}
    S_0 := J, \quad S_{k+1} := \shrink_{\vec{H}^{(k)}}(\vec{x},S_{k}) \,,
\end{equation}
and, if we still have $\# S_{k^\ast} > 1$, in a final step we yield the set
\begin{equation} \label{eq:spot}
    \spot_{\alpha}(\vec{x},J) := 
    \shrink_{\vec{h}^\ast}(\vec{x},S_{k^\ast}) \,,
\end{equation}
where $\vec{h}^\ast = \vec{h}^\ast(S_{k^\ast})$ shall be injective on $S_{k^\ast}$ and take values in $[\#S_{k^\ast}]$.
If we are successful, $\spot_{\alpha}(\vec{x},J) = \{j\}$ and we identify the heavy hitter.
In case of failure, $\spot_{\alpha}$ may return another one-element set or the empty set.
In total,
$\spot_{\alpha}(\vec{x},J)$ requires at most $2k^\ast + 2 \preceq \log \log m$ adaptive linear measurements.
Recall that, besides a hash vector $\vec{H}^{(k)}$, the procedure $\shrink_{\vec{H}^{(k)}}$ uses random Rademacher coefficients $\vec{\sigma} = (\sigma_i)_{i=1}^m$. In fact, we may use the same Rademacher coefficients for all shrinking steps because independence is not needed for these in the subsequent proof. (Still, $\vec{\sigma}$ needs to be independent of the hash vectors~$\vec{H}^{(k)}$.)

\begin{lemma}\label{lem:Spot}
    Let $\alpha \in (0,1)$ and assume that, for some $j \in J \subseteq [m]$, the vector $\vec{x} \in \R^m$ satisfies the
    heavy hitter condition
    \begin{equation}\label{Eq:HeavyHitterCond}
           \|\vec{x}_{J \setminus \{j\}}\|_2 \leq \frac{|x_j| \cdot \alpha^{3/2}}{2049 \sqrt{2}} \,.
    \end{equation}
    Then
    \begin{equation*}
      \P\Bigl( \spot_{\alpha}(\vec{x},J) = \{j\} \Bigr) \geq 1- \alpha \,.
    \end{equation*}
\end{lemma}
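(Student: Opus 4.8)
The plan is to argue that $\spot_\alpha$ succeeds whenever \emph{every one} of its $k^\ast+1$ shrinkage steps correctly locates the bucket of $j$, and to bound the total failure by a union bound over the individual steps.

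Given the independent hash vectors, I would introduce the \emph{ideal candidate sets} $\wt S_0:=J$ and $\wt S_{k+1}:=\{\,i\in\wt S_k\colon H_i^{(k)}=H_j^{(k)}\,\}$ for $0\le k<k^\ast$; these are deterministic functions of $\vec H^{(0)},\dots,\vec H^{(k^\ast-1)}$ and always contain $j$. For $0\le k\le k^\ast$ let $G_k$ be the event that the $k$-th shrinkage step, \emph{run on $\wt S_k$} (with $\vec H^{(k)}$ for $k<k^\ast$ and with the injective hash $\vec h^\ast=\vec h^\ast(\wt S_{k^\ast})$ for $k=k^\ast$), returns a set containing $j$; equivalently, that it recovers the correct hash value of $j$, which also rules out $Y_1=0$. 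A short induction shows $S_l=\wt S_l$ on $G_0\cap\dots\cap G_{l-1}$, so that on $\bigcap_{k=0}^{k^\ast}G_k$ every step hands $j$'s bucket to the next one and the final injective step collapses it to $\{j\}$; an early termination $\#\wt S_l=1$ only helps, since then $\wt S_l=\{j\}$ already. Hence $\P\bigl(\spot_\alpha(\vec x,J)\neq\{j\}\bigr)\le\sum_{k=0}^{k^\ast}\P(G_k^c)$.

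To estimate $\P(G_k^c)$, condition on $\vec H^{(0)},\dots,\vec H^{(k-1)}$ — which fixes $\wt S_k$ — and run the computation from the proof of Lemma~\ref{lem:Shrink} on $\wt S_k$ with the fresh, independent randomness $\vec H^{(k)},\vec\sigma$. Even without the heavy-hitter hypothesis there, that computation gives $\P(G_k^c\mid\wt S_k)\le\expect[T_1^2\mid\wt S_k]/\epsilon_k^2+\expect[T_2^2\mid\wt S_k]/(b_k\epsilon_k)^2$ with $\epsilon_k=\tfrac1{2D_k-1}$, $b_k=\tfrac{D_k-1}2$; since the entries of $\vec H^{(k)}$ are uniform on $[D_k]$ one may use $\expect[T_2^2\mid\wt S_k]=\tfrac{D_k^2-1}{12}\|\vec x_{\wt S_k\setminus\{j\}}\|_2^2/x_j^2$ instead of the cruder worst-case bound, giving $\P(G_k^c\mid\wt S_k)\le c_k(2D_k-1)^2\|\vec x_{\wt S_k\setminus\{j\}}\|_2^2/x_j^2$ with $c_k:=1+\tfrac{D_k+1}{3(D_k-1)}<2$ (for the deterministic step $k=k^\ast$ one keeps $c_{k^\ast}=2$ and replaces $D_{k^\ast}$ by $\#\wt S_{k^\ast}\le\#J\le m$). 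Iterating the second-moment identity in the proof of Lemma~\ref{lem:hash} with $p=2$ yields $\expect\|\vec x_{\wt S_k\setminus\{j\}}\|_2^2=\|\vec x_{J\setminus\{j\}}\|_2^2/\prod_{l<k}D_l$, so taking expectations and inserting the hypothesis $\|\vec x_{J\setminus\{j\}}\|_2^2\le|x_j|^2\alpha^3/(2\cdot2049^2)$ leaves
\begin{equation*}
    \P(G_k^c)\le\frac{c_k(2D_k-1)^2\alpha^3}{2\cdot2049^2\prod_{l<k}D_l}\ \ (k<k^\ast),\qquad
    \P(G_{k^\ast}^c)\le\frac{(2m-1)^2\alpha^3}{2049^2\prod_{l<k^\ast}D_l}.
\end{equation*}

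What remains — and is the delicate part — is to check that these bounds sum to at most $\alpha$, the point being that essentially all the slack sits in the single $k=0$ term. There the product is empty, $2D_0-1\le 2^{11}\alpha^{-1}+1<2049\,\alpha^{-1}$, and $c_0<1.34$ because $D_0\ge2^{10}$, so the $k=0$ term is below $\tfrac34\alpha$. For $k\ge1$ one has $(2D_k-1)^2\asymp2^{16(9/8)^k}\alpha^{-2}$ while $\prod_{l<k}D_l>2^{8\sum_{l<k}(9/8)^l}=2^{64((9/8)^k-1)}$, so the terms decay doubly exponentially in $k$ and their sum is only a small fraction of $\alpha$; and by the choice $k^\ast=\lceil\log_{9/8}\tfrac{\log_2 m}{8}\rceil$ one has $\prod_{l<k^\ast}D_l>2^{8\log_2m-64}=m^8/2^{64}$, whence $\P(G_{k^\ast}^c)\lesssim\alpha^3 m^{-6}$. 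Adding up gives $\sum_k\P(G_k^c)\le\alpha$. (If $m$ is so small that $k^\ast=0$, then only the final term appears and $\#J\le m$ makes it trivially small.) The main obstacle is exactly this accounting: the constant $2049$, the term $k+2$ in the exponent defining $D_k$, and the value of $k^\ast$ must be calibrated precisely so that the dominant first step stays under $\alpha$ with just enough room left over to absorb the rapidly decaying tail and the $m$-dependent last term.
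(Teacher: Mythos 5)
Your proof is correct, but it reorganizes the probabilistic argument in a way that genuinely differs from the paper's. Both proofs work with the same ideal candidate sets $\wt S_k$ and a union bound over the $k^\ast+1$ steps; the difference lies in how the per-step failure probability is controlled. The paper first isolates a ``good hashing'' event $\widehat C=\bigcap_k C_k$ on which each hashing reduces the noise $\|\vec x_{\wt S_k\setminus\{j\}}\|_2$ by a prescribed factor $\gamma_k$ (via Lemma~\ref{lem:hash}, costing $\alpha/2$ in total), and then shows by induction that on $\widehat C$ the deterministic heavy-hitter hypothesis of Lemma~\ref{lem:Shrink} holds at every level, so the shrink steps succeed with conditional probability at least $1-\alpha/2$; the two halves combine to $(1-\alpha/2)^2>1-\alpha$. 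You instead skip the intermediate high-probability noise control and bound each $\P(G_k^c)$ directly in expectation over the hash randomness, using the exact second-moment recursion $\expect\|\vec x_{\wt S_k\setminus\{j\}}\|_2^2=\|\vec x_{J\setminus\{j\}}\|_2^2/\prod_{l<k}D_l$ and then summing. This buys a flatter argument with no nested conditioning and no induction on heavy-hitter thresholds, but the price is that the constant budget becomes tight at $k=0$: with the crude bound $\expect T_2^2\le b^2\expect T_1^2$ the first term alone nearly exhausts $\alpha$, and your refinement $\expect\bigl[(H_i^{(0)}-\tfrac{D_0+1}{2})^2\bigr]=\tfrac{D_0^2-1}{12}$, giving $c_0<1.34$ in place of $2$, is genuinely needed to leave room for the remaining terms. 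I checked the resulting accounting — the $k=0$ term is below $0.67\alpha$, the $k\ge1$ terms sum to less than about $0.02\alpha$ by the doubly exponential decay, and the final term is below $2^{-4}\alpha$ since $k^\ast\ge1$ forces $m>256$ (the case $k^\ast=0$ being trivial as you note) — and it closes. So the proof is valid as an alternative to the paper's conditioning argument.
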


\begin{proof}
    Consider the following nested sequence of random sets that contain the heavy hitter~$j$, generated by the hash vectors~$\vec{H}^{(k)}$:
    \begin{equation*}
        \wt{S}_0 := J, \qquad
        \wt{S}_{k+1}
            := \left\{i \in \wt{S}_k \colon H^{(k)}_i = H^{(k)}_j \right\}
        \quad\text{for } k=0,\ldots,k^\ast - 1 \,.
    \end{equation*}
    The hashing parameters $D_k$ are chosen as
    \begin{equation*}
        D_k := \left\lceil \gamma_k^2 \alpha_k^{-1} \right\rceil,
        \qquad
        \text{where}\quad
        \alpha_k := \frac{\alpha}{2^{k+2}},\quad
        \gamma_k := 2^{4 \cdot b^k} \quad
        \text{with } b=\frac{9}{8},
    \end{equation*}
    hence, $\gamma_k \leq \sqrt{\alpha_k D_k}$
    and Lemma~\ref{lem:hash} implies that the event
    \begin{equation*}
        C_k := \left\{ \lVert \vec{x}_{\wt{S}_{k+1} \setminus \{ j\}} \rVert_2  
            \leq \frac{\lVert \vec{x}_{\wt{S}_k \setminus  \{j\}  }  \rVert_2}{\gamma_k} 
            \right\}
    \end{equation*}
    holds with probability at least $1 - \alpha_k$.
    Here we need independence of the hash vectors.
    We will consider the success of $\spot_{\alpha}$ conditioned on the event
    \begin{equation*}
        \widehat{C} := \bigcap_{k=0}^{k^\ast - 1} C_k \,,
    \end{equation*}
    for which a union bound gives the probability estimate
    \begin{equation*}
        P\bigl(\widehat{C}\bigr)
            \geq 1 - \sum_{k=0}^{k^\ast - 1} \alpha_k
            \geq 1 - \frac{\alpha}{4} \sum_{k=0}^{\infty} 2^{-k}
            = 1 - \frac{\alpha}{2} \,.
    \end{equation*}
    
    The following events relate to the success of the individual shrinking steps:
    \begin{align*}
        E_k &:= \left\{\shrink_{\vec{H}^{(k)}}(\vec{x},\widetilde{S}_k)
                    = \widetilde{S}_{k+1}
                \right\}
            \quad\text{for } k=0,\ldots,k^\ast - 1 \,, \\
        E_{k^\ast}
            &:= \left\{\shrink_{\vec{h}^{\ast}}(\vec{x},\widetilde{S}_{k^\ast})
                    = \{j\}
                \right\} .
    \end{align*}
    The intersection
    of these events implies success of $\spot_{\alpha}$:
    \begin{equation*}
        \widehat{E} := \bigcap_{k=0}^{k^\ast} E_k
            = \bigl\{\spot_{\alpha}(\vec{x},J) = \{j\}\bigr\} \,.
    \end{equation*}
    From Lemma~\ref{lem:Shrink} we know
    \begin{equation*}
        \P\left( E_k \;\middle|\;
                \bigl\| \vec{x}_{\wt{S}_k\setminus\{j\}}\bigr\|_{2}
                    \leq \frac{|x_j| \sqrt{\alpha_k}}{\sqrt{2} (2D_k - 1 )}
            \right)
            \geq 1 - \alpha_k \,.
    \end{equation*}
    We claim that successful hashing $\widehat{C}$ implies the required heavy hitter conditions
    \begin{equation} \label{eq:hatCimpliesHHC}
        \bigl\| \vec{x}_{\wt{S}_k\setminus\{j\}}\bigr\|_{2}
            \leq \frac{|x_j| \sqrt{\alpha_k}}{\sqrt{2} (2D_k - 1 )} \,,
            \qquad k=0,\ldots,k^\ast \,.
    \end{equation}
    Assuming that this is indeed the case, a union bound gives
    \begin{equation*}
        \P\bigl(\widehat{E} \,\big|\, \widehat{C}\bigr) 
            \geq 1 - \sum_{k=0}^{k^\ast} \alpha_k
            \geq 1 - \frac{\alpha}{2} \,,
    \end{equation*}
    hence, $\spot_{\alpha}$
    succeeds with the desired probability:
    \begin{equation*}
        \P\bigl(\widehat{E}\bigr)
        \geq \P\bigl(\widehat{E} \,\big|\, \widehat{C}\bigr) \P\bigl(\widehat{C}\bigr)
        \geq  (1 - \alpha/2)^2 > 1 - \alpha \,.
    \end{equation*}

    In the remainder of the proof we show \eqref{eq:hatCimpliesHHC} by means of induction provided the condition $\widehat{C}$ holds.
    Using $\gamma_k^2 \alpha_k^{-1} \geq 4\gamma_0^2 = 2^{10}$
    we see
    \begin{equation*}
        2D_k - 1 = 2 \lceil \gamma_k^2 \alpha_k^{-1} \rceil - 1
        \leq 2 \gamma_k^2 \alpha_k^{-1} + 1
        \leq \left(2 + 2^{-10}\right) \gamma_k^2 \alpha_k^{-1} \,,
    \end{equation*}
    and instead of \eqref{eq:hatCimpliesHHC} we will prove the even stronger
    heavy hitter conditions
    \begin{equation} \label{eq:strongerHHC}
        \left\|\vec{x}_{\wt{S}_k\setminus\{j\}}\right\|_{2}
            \leq \frac{|x_j| \cdot \alpha_k^{3/2}}{\sqrt{2} \left(2 + 2^{-10}\right) \gamma_k^2} \,,
            \qquad k=0,\ldots,k^\ast \,.
    \end{equation}
    For $k=0$ with $\widetilde{S}_0 = J$, $\alpha_0 = \alpha/4$, and $\gamma_0 = 2^4$, this condition reads
    \begin{equation*}
        \left\|\vec{x}_{J\setminus\{j\}}\right\|_{2}
            \leq \frac{|x_j| \cdot \alpha^{3/2}}{\sqrt{2} \left(2^{11} + 1\right)}
    \end{equation*}
    which is exactly what we required in the lemma.
    Assume that~\eqref{eq:strongerHHC} holds for a fixed $k < k^\ast$.
    Then, conditioned on $C_k$,
    we find \eqref{eq:strongerHHC} for $k+1$
    by the simple observation
    \begin{equation*}
        \frac{\alpha_k^{3/2}}{\gamma_k^3} \cdot \frac{\gamma_{k+1}^2}{\alpha_{k+1}^{3/2}}
            = 2^3 \cdot 2^{2 \cdot 4 \cdot b^{k+1} - 3 \cdot 4 \cdot b^k}
            = 2^3 \cdot 2^{4 (2b-3) b^k}
            \leq 
            2^{8b-9}
            = 1
    \end{equation*}
    where we use $b = \frac{9}{8}$, in particular $2b-3 < 0$ and $b^k \geq 1$.
\end{proof}

\begin{remark} \label{rem:spotiid}
    Assuming full independence of the Rademacher coefficients $\sigma_{i}$,
    we can give a slight improvement
    of Lemma~\ref{lem:Shrink} using Hoeffding's inequality,
    namely, as a heavy hitter condition for a shrink step we then only require
    \begin{equation*}
        \|\vec{x}_{S_k \setminus\{j\}}\|_2 \leq \frac{|x_j|}{(2D_k - 1)\sqrt{2 \log \frac{4}{\alpha_k}}} \,.
    \end{equation*}
    The heavy hitter condition in Lemma~\ref{lem:Spot} for the application of $\spot_\alpha$
    is then also slightly reduced in its $\alpha$-dependence:
    \begin{equation*}
        \|\vec{x}_{J \setminus \{j\}}\|_2
            \leq \frac{|x_j| \cdot \alpha}{1025 \sqrt{2\log \frac{16}{\alpha}}} \,.
    \end{equation*}
    A smaller heavy hitter constant $\gamma$ will permit a smaller hashing parameter $D \sim \gamma^p$ (for $1 \leq p \leq 2$)
    which is chosen such that all $\eps$-heavy coordinates
    are sorted into buckets on which they fulfil the heavy hitter condition.
    Eventually, however, in the information cost analysis
    only $\log D$ plays a role.
    On the other hand, adaptivity does start to help significantly if $D \ll m$,
    see Remark~\ref{rem:largeD}.
    Hence, if $D$ can be reduced,
    then the adaptive power of $\spot$ unfolds for smaller~$m$ already.
\end{remark}

\section{Adaptive uniform approximation}
\label{sec:AdaAppInf}

\subsection{The general algorithmic structure}
\label{sec:generalAlg}

For $m \in \N$, $\eps,\delta \in (0,1)$, $1 \leq p \leq 2$,
we will define a randomized algorithm
\begin{equation*}
    A_{p}^{\eps,\delta}: \Omega \times \R^m \to \R^m
\end{equation*}
which satisfies
\begin{equation*}
    \sup_{\substack{\vec{x} \in \R^m \\ \|\vec{x}\|_p \leq 1}}
        \P\left( \|A_{p}^{\eps,\delta}(\vec{x}) - \vec{x}\|_{\infty} > \eps\right) \leq \delta \,,
\end{equation*}
where we write $A_{p}^{\eps,\delta}(\vec{x}) = A_{p}^{\eps,\delta}(\cdot,\vec{x})$ for the random variable that is the output.
Recall that for $\|\vec{x}\|_p \leq 1$ there are at most $k_0 := \lfloor \eps^{-p} \rfloor$ coordinates~$j$ with $|x_j| \geq \eps$, so-called $\eps$-heavy coordinates.
The algorithm is defined in two preparatory stages and a final output stage:
\begin{enumerate}
    \item \emph{Finding important buckets:}
        This stage will return a collection of coordinate sets (buckets) that
        covers all $\eps$-heavy coordinates and isolates them in the sense that each of them fulfils a heavy-hitter condition~\eqref{eq:heavy-hitter-cond-general} on its respective bucket with a suitable heavy-hitter constant
        $\gamma = \gamma_p(\eps,\delta)$.
        We allow this stage to fail with probability at most $\frac{\delta}{2}$.
        In detail, we
        fix a hashing parameter \mbox{$D = D_p(\eps,\delta)$}
        and draw a hash vector $\vec{H} = (H_i)_{i=1}^m$
        with pairwise independent entries \mbox{$H_i \sim \Uniform[D]$}.
        By an implementation of $\select_{R,G,k}^{\vec{H}}(\vec{x})$
        with parameters \mbox{$R := R_{p}(D,\delta)$}, \mbox{$G := G_{p}(\eps)$}, 
        and \mbox{$k := k_{p}(\eps)$},
        we compute a $k$-element set of hash values,
        \begin{equation*}
            I_k := \select_{R,G,k}^{\vec{H}}(\vec{x}) \subseteq [D] \,,
        \end{equation*}
        which defines a family $(J_d)_{d \in I_k}$ of presumably important buckets $J_d \subseteq [m]$, see the definition~\eqref{eq:J_d} of buckets corresponding to the hash vector $\vec{H}$. The parameters will be chosen such that hashing fails to isolate the important coordinates with probability at most $\delta_0 := \frac{\delta}{4}$, and also $\select$ fails to identify the heavy buckets with probability at most $\delta_1 := \frac{\delta}{4}$.
    \item \emph{Spotting heavy hitters:}
        Let $\alpha := \delta/(2k_0)$. 
        With $\#I_k = k$ instances of $\spot_{\alpha}$
        we obtain a coordinate selection
        \begin{equation*}
            K := \bigcup_{d \in I_k} \spot_{\alpha}(\vec{x},J_d)
        \end{equation*}
        for which we hope that it contains all $\eps$-heavy coordinates.
        Conditioned on the success in the first stage, for each of the $\eps$-heavy coordinates (of which there are up to $k_0$)
        the corresponding instance of $\spot_\alpha$ only fails with probability~$\alpha$, so the failure probability of this stage is at most $\delta_2 := \frac{\delta}{2}$ by a union bound.
    \item \emph{Output:} Using $\#K \leq k$ direct entry queries,
        return $\vec{z} = \vec{x}_K^\ast =: A_{p}^{\eps,\delta}(\vec{x})$ defined via
        \begin{equation*}
            z_i = \begin{cases} x_i &\text{for } i \in K, \\
                                0 &\text{else.}
                    \end{cases}
        \end{equation*}
        If the previous stages were successful in the sense that $K$ contains all $\eps$-heavy coordinates, then
        \begin{equation*}
            \bigl\lVert A^{\eps, \delta}_{p}(\vec{x}) - \vec{x}  \bigr\rVert_{\infty} \leq \eps.
        \end{equation*}
\end{enumerate}

Let us point out that the algorithm is \emph{homogeneous} in the sense that for any factor $t \in \R \setminus \{0\}$ and any input $\vec{x} \in \R^m$ we have
\begin{equation*}
    A_{p}^{\eps,\delta}(t \vec{x}) = t \, A_{p}^{\eps,\delta}(\vec{x}) \,.
\end{equation*}
Moreover, the algorithm applies the same functionals to $\vec{x}$ and $t \vec{x}$ when realized with the same values of its random parameters.
Indeed, the adaption in $\select$ depends on the relative values of norm estimates, in $\spot_{\alpha}$, the subroutine $\shrink_{\vec{h}}$ uses the ratio of two functionals to decide.
For further context on homogeneous algorithms we refer to~\cite{KK24}.

\subsection{Parametric setup and information cost}
\label{sec:algparam}

In this section we provide the precise parametric setup for the algorithm described in Section~\ref{sec:generalAlg} together with an analysis of the information cost.
This then leads to several results on complexity and $n$-th minimal errors in terms of the standard Monte Carlo error~\eqref{eq:MCerr}, see Section~\ref{sec:e(n)}.

In view of Lemma~\ref{lem:Spot}, with $k_0 = \lfloor \eps^{-p} \rfloor$ and $\alpha = \frac{\delta}{2k_0}$, the heavy-hitter constant required for $\spot_\alpha$ to work is
\begin{equation*}
    \gamma = 2049\sqrt{2} \, \alpha^{-3/2}
        = 8196 \, k_0^{3/2} \,\delta^{-3/2}
        \simeq 8196 \, \eps^{-3p/2} \, \delta^{-3/2} .
\end{equation*}
According to Corollary~\ref{cor:hash}, we need to take the hashing parameter
\begin{equation*}
    D := \left\lceil \left(\frac{\gamma}{\eps}\right)^p \cdot \frac{k_0}{\delta_0} \right\rceil
    \simeq 4 \cdot 8196^p \, \eps^{-(3p^2/2 + 2p)} \, \delta^{-(3p/2 + 1)}
\end{equation*}
in order to guarantee that, after hashing,
all $\eps$-heavy coordinates fulfil a heavy-hitter condition on their respective bucket (with failure probability at most $\delta_0 = \frac{\delta}{4}$).
This may seem quite large (for $p=2$ we have $D\simeq 268\,697\,664 \, \eps^{-10} \, \delta^{-4}$), but since only $\log D$ will enter the error and complexity bounds, we may allow for it.
On the other hand, applying $\spot$ will only make sense if $D \ll m$,
see Remark~\ref{rem:largeD}.
We also have to admit that improvements of Remark~\ref{rem:spotiid} and preconditioning~\cite[Lem~49]{LNW17} have been ignored so far.
Eventually, $\select^{\vec{H}}_{R,G,k}$ significantly reduces the number of buckets we need to consider.
According to Lemma~\ref{lem:bucketselect}, we need to choose the algorithmic parameters as
\begin{align*}
    R &:= 2 \left\lceil \log_2 \frac{D}{2\delta_1} - \frac{1}{2} \right\rceil + 1
        \simeq 2 \log_2 \frac{2D}{\delta}
        \asymp \log \eps^{-1} + \log \delta^{-1}, \\
    k &:= \left\lfloor 2^{7p/2} \, \eps^{-p} \right\rfloor,
    \qquad G := 4k \,,
\end{align*}
so that bucket selection detects all heavy buckets with failure probability at most $\delta_1 = \frac{\delta}{4}$.
Then only $k$ buckets remain for which we have to perform one-sparse recovery
(for $p=2$ we have $k \simeq 128 k_0$, so we might still recover many more entries than needed).
The information cost of the first stage is
\begin{equation*}
    n_1 = R \cdot G \asymp \left(\log \eps^{-1} + \log \delta^{-1}\right) \cdot \eps^{-p} 
    \,.
\end{equation*}
The information cost of the second stage,
resulting from $k$-times repeating $\spot_\alpha$,
is bounded above by
\begin{equation*}
    n_2 = k \cdot (2k^\ast(m) + 2)
        \asymp \left(\log \log m \right) \cdot \eps^{-p}
        \,.
\end{equation*}
The information cost of the third stage is bounded by $n_3 = k$.
This gives the order of the total cardinality $n = n_1+n_2+n_3$ of the algorithm,
as summarized in the following theorem.

\begin{theorem}[Uniform approximation] \label{thm:UniformUB}
    Let $1 \leq p \leq 2$, $m \in \N$, and $\eps,\delta \in (0,1)$.
    For the randomized algorithm
    $A_{p}^{\eps,\delta}$
    with parameters as above one has
    \begin{equation*}
        \sup_{\substack{\vec{x} \in \R^m \\ \|\vec{x}\|_p \leq 1}}
            \P\left( \|A_{p}^{\eps,\delta}(\vec{x}) - \vec{x}\|_\infty > \eps\right) \leq \delta \,.
    \end{equation*}
    Moreover, one has
    \begin{equation*}
        \card(A_{p}^{\eps, \delta}) \preceq
                \left(\log\delta^{-1} + \log\eps^{-1}+\log\log m\right) \cdot \eps^{-p} .
    \end{equation*}
\end{theorem}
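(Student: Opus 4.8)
The plan is to verify that the algorithm $A_p^{\eps,\delta}$ described in Section~\ref{sec:generalAlg}, when equipped with the explicit parameters of Section~\ref{sec:algparam}, satisfies both assertions. The probabilistic guarantee is essentially a bookkeeping exercise over the three stages: I would first argue that, conditioned on nothing, the event ``hashing isolates every $\eps$-heavy coordinate on its bucket (with heavy-hitter constant $\gamma$)'' fails with probability at most $\delta_0 = \frac{\delta}{4}$ by Corollary~\ref{cor:hash}, using the stated choice of $D$; simultaneously the event ``$\select_{R,G,k}^{\vec H}$ returns a set $I_k$ containing all heavy buckets'' fails with probability at most $\delta_1 = \frac{\delta}{4}$ by Lemma~\ref{lem:bucketselect}, using the stated $R$, $k$, $G$. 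One subtlety to flag here is matching the constant: Lemma~\ref{lem:bucketselect} is phrased with heavy-hitter constant $8\sqrt 2$, while Corollary~\ref{cor:hash} produces the larger constant $\gamma \simeq 8196\,\eps^{-3p/2}\delta^{-3/2} > 8\sqrt 2$, so the buckets isolated by hashing are in particular heavy in the weaker sense of~\eqref{eq:heavybucket} required by $\select$; this is why $k$ in the algorithm is taken as $\lfloor 2^{7p/2}\eps^{-p}\rfloor$ rather than the $\lfloor(8\sqrt2/\eps)^p\rfloor$ of the lemma. Then, on the intersection of these two good events, each of the at most $k_0$ heavy coordinates sits in some bucket $J_d$ with $d\in I_k$ and satisfies the heavy-hitter condition~\eqref{Eq:HeavyHitterCond} of Lemma~\ref{lem:Spot} with the chosen $\alpha = \delta/(2k_0)$ (this is exactly how $\gamma$ was reverse-engineered), so the corresponding call of $\spot_\alpha$ recovers it with failure probability at most $\alpha$; a union bound over the $\le k_0$ heavy coordinates gives failure probability $\le k_0\alpha = \frac{\delta}{2}$ for stage two. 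Combining the three failure probabilities $\frac{\delta}{4} + \frac{\delta}{4} + \frac{\delta}{2} = \delta$ via a union bound, on the complementary event $K$ contains all $\eps$-heavy coordinates, whence $\|A_p^{\eps,\delta}(\vec x) - \vec x\|_\infty = \max_{i\notin K}|x_i| \le \eps$ by definition of the output stage. Taking the supremum over $\|\vec x\|_p\le 1$ yields the first claim.

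For the cardinality bound I would simply sum the three stage costs already identified in Section~\ref{sec:algparam}. Stage one costs $n_1 = R\cdot G$; with $R \asymp \log\eps^{-1} + \log\delta^{-1}$ (from $R = 2\lceil\log_2\frac{D}{2\delta_1} - \frac12\rceil + 1$ together with $\log D \asymp \log\eps^{-1} + \log\delta^{-1}$, since $D$ is a fixed power of $\eps^{-1}\delta^{-1}$) and $G = 4k \asymp \eps^{-p}$, this is $n_1 \asymp (\log\eps^{-1}+\log\delta^{-1})\eps^{-p}$. Stage two costs $n_2 = k\cdot(2k^\ast(m)+2)$, and from~\eqref{eq:k*} one has $k^\ast(m) \asymp \log\log m$, so $n_2 \asymp (\log\log m)\eps^{-p}$. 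Stage three costs $n_3 = \#K \le k \asymp \eps^{-p}$. Adding, $\card(A_p^{\eps,\delta}) = n_1+n_2+n_3 \preceq (\log\delta^{-1} + \log\eps^{-1} + \log\log m)\eps^{-p}$, which is the stated bound.

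The main obstacle is not any single estimate but the careful alignment of the failure-probability budget and the heavy-hitter constants across the three lemmas — in particular checking that the constant $\gamma$ coming out of Lemma~\ref{lem:Spot} (needed for $\spot_\alpha$ at uncertainty $\alpha = \delta/(2k_0)$) is exactly what one feeds into Corollary~\ref{cor:hash} to determine $D$, and that this same $D$ together with the $\select$ parameters makes Lemma~\ref{lem:bucketselect} applicable with its own (smaller) constant. Once these compatibility checks are in place, both conclusions follow by elementary union bounds and the arithmetic already carried out in Section~\ref{sec:algparam}; the $\log\log m$ term is inherited verbatim from $k^\ast(m)$, and the $m$-independence of $R$, $k$, $G$, $D$ is what keeps the $m$-dependence confined to that single doubly-logarithmic factor.
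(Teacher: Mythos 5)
Your proposal is correct and takes essentially the same route as the paper, which establishes the theorem by exactly this union bound $\tfrac{\delta}{4}+\tfrac{\delta}{4}+\tfrac{\delta}{2}$ over the three stages (Corollary~\ref{cor:hash}, Lemma~\ref{lem:bucketselect}, Lemma~\ref{lem:Spot}) and by summing the stage costs $n_1+n_2+n_3$ as computed in Section~\ref{sec:algparam}. The only (harmless) slip is your contrast between $\lfloor 2^{7p/2}\eps^{-p}\rfloor$ and $\lfloor(8\sqrt 2/\eps)^p\rfloor$: these are the same number since $(8\sqrt 2)^p=2^{7p/2}$, so $k$ is not enlarged relative to Lemma~\ref{lem:bucketselect}.
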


\subsection{Complexity for the standard Monte Carlo error}
\label{sec:e(n)}

Having found bounds for the probabilistic error, see Theorem~\ref{thm:UniformUB},
we readily obtain bounds for the expected error,
namely, we define
\begin{equation} \label{eq:A_pq^eps}
    A_{p}^{\eps} := A_{p}^{\bar{\eps},\bar{\delta}}
    \quad\text{with } \bar{\eps} = \bar{\delta} := \frac{\eps}{2} \,.
\end{equation}

\begin{corollary}\label{cor:CostExpectedError}
    Let $1 \leq p \leq 2$, $m \in \N$, and $\eps \in (0,1)$.
    For the randomized algorithm
    $A_{p}^{\eps}$ defined via \eqref{eq:A_pq^eps} we have the expected error
    \begin{equation*}
        \sup_{\substack{\vec{x} \in \R^m \\ \|\vec{x}\|_p \leq 1}}
            \expect \|A_{p}^{\eps}(\vec{x}) - \vec{x}\|_{\infty} \leq \eps \,.
    \end{equation*}
    Moreover,
    one has (for large $m$)
            \begin{equation*}
                \card(A_{p}^{\eps}) \preceq
                    \left(\log \eps^{-1} + \log\log m \right)
                        \cdot \eps^{-p} \,.
            \end{equation*}
\end{corollary}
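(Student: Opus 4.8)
The plan is to obtain the expected-error bound from the probabilistic bound of Theorem~\ref{thm:UniformUB} by a simple truncation argument, exploiting the fact that $A_p^{\eps}$ always returns a thinned-out copy of the input and therefore has a bounded worst-case error; the cardinality bound is then just a substitution.

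First I would reduce to the unit ball: since $A_p^{\eps}$ is homogeneous, it suffices to bound $\expect\|A_p^{\eps}(\vec x) - \vec x\|_\infty$ for inputs with $\|\vec x\|_p \le 1$. The essential structural remark is that, no matter how its random parameters turn out, the output has the form $\vec x_K^\ast$ for some (random) set $K \subseteq [m]$, so
\begin{equation*}
    \|A_p^{\eps}(\vec x) - \vec x\|_\infty = \max_{i \in [m] \setminus K} |x_i| \le \|\vec x\|_\infty \le \|\vec x\|_p \le 1 \,;
\end{equation*}
in other words, the algorithm never performs worse than the trivial output~$\vec 0$, irrespective of success or failure of the three stages.

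Now set $\bar\eps = \bar\delta = \eps/2$ as in \eqref{eq:A_pq^eps} and split the expectation according to whether the event $\{\|A_p^{\eps}(\vec x) - \vec x\|_\infty \le \bar\eps\}$ occurs. On this event the integrand is at most $\bar\eps$; on its complement the integrand is at most~$1$ by the previous display, while Theorem~\ref{thm:UniformUB}, applied with the parameter pair $(\bar\eps,\bar\delta)$, bounds the probability of the complement by $\bar\delta$. Hence
\begin{equation*}
    \expect \|A_p^{\eps}(\vec x) - \vec x\|_\infty \le \bar\eps \cdot 1 + 1 \cdot \bar\delta = \frac{\eps}{2} + \frac{\eps}{2} = \eps \,,
\end{equation*}
which is the asserted bound (and, combined with homogeneity, it upgrades to $\expect\|A_p^{\eps}(\vec x) - \vec x\|_\infty \le \eps\,\|\vec x\|_p$ for arbitrary $\vec x \in \R^m$).

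Finally, the cardinality estimate follows by substituting $\bar\eps = \bar\delta = \eps/2$ into Theorem~\ref{thm:UniformUB}: one has $\log\bar\delta^{-1} + \log\bar\eps^{-1} = 2\log\eps^{-1} + 2\log 2 \asymp \log\eps^{-1}$, so the two logarithmic-in-$\eps$ terms collapse, and $\bar\eps^{-p} = 2^p\,\eps^{-p} \asymp \eps^{-p}$ for fixed~$p$, giving
\begin{equation*}
    \card(A_p^{\eps}) = \card\bigl(A_p^{\bar\eps,\bar\delta}\bigr) \preceq \bigl(\log\eps^{-1} + \log\log m\bigr) \cdot \eps^{-p} \,.
\end{equation*}
I do not expect any genuine obstacle here: the only point worth isolating is the deterministic fallback bound on the error on the failure event, after which both claims reduce to elementary estimates and asymptotic bookkeeping.
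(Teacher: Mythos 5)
Your proposal is correct and follows essentially the same route as the paper: the deterministic fallback bound $\|A_p^{\eps}(\vec x)-\vec x\|_\infty=\max_{i\notin K}|x_i|\le\|\vec x\|_p\le 1$, the split of the expectation over the success event of Theorem~\ref{thm:UniformUB} with $\bar\eps=\bar\delta=\eps/2$, and direct substitution into the cardinality bound. Nothing is missing.
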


\begin{proof}
    By construction of the output, we always have
    \begin{equation*}
        \|A_{p}^{\eps}(\vec{x}) - \vec{x}\|_{\infty} \leq \|\vec{x}\|_{\infty} \leq \|\vec{x}\|_p \,.
    \end{equation*}
    Hence it holds
    \begin{align*}  
        \expect \|A_{p}^{\eps}(\vec{x}) - \vec{x}\|_{\infty}
            &\leq \underbrace{\P\left( \|A_{p}^{\eps}(\vec{x}) - \vec{x}\|_{\infty} > \frac{\eps}{2}\right)}_{\leq \, \overline{\delta} \,=\, \eps/2}
                    \cdot \underbrace{\|\vec{x}\|_{\infty}}_{\leq 1} \\
            &\qquad    + \underbrace{\P\left( \|A_{p}^{\eps}(\vec{x}) - \vec{x}\|_{\infty} \leq \frac{\eps}{2}\right)}_{\leq 1} \cdot \, \frac{\eps}{2} \\
            &\leq \eps \,.
    \end{align*}
    The cost bound follows from plugging $\bar{\eps} = \bar{\delta} := \frac{\eps}{2}$
    into Theorem~\ref{thm:UniformUB}.
\end{proof}

By inversion of the cardinality bound we obtain upper bounds for the $n$-th minimal error.

\begin{theorem}\label{thm:errorrates}
    Let $1 \leq p \leq  2$
    and $m,n \in \N$.
    Then 
    \begin{equation*}
        e^{\ran}(n, \ell_p^m \embed \ell_{\infty}^m)
            \preceq 
            \min\left\{1,
            \left(\frac{\log n + \log \log m}{n}\right)^{\frac{1}{p}}\right\}.
    \end{equation*}
\end{theorem}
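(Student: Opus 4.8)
The plan is to invert the cardinality bound from Corollary~\ref{cor:CostExpectedError}. That corollary tells us that for every $\eps \in (0,1)$ there is an algorithm $A_p^\eps$ using at most $C(\log \eps^{-1} + \log\log m)\,\eps^{-p}$ linear measurements and achieving expected error at most $\eps$ on the unit ball of $\ell_p^m$. So given a budget $n$, I want to choose $\eps = \eps(n,m)$ as large as possible (to minimise the error) subject to the cardinality of $A_p^\eps$ not exceeding $n$; then $e^{\ran}(n,\ell_p^m\embed\ell_\infty^m) \le \eps(n,m)$ by definition of the minimal error.

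First I would handle the trivial regime: the identity always satisfies $\|\vec{x}\|_\infty \le \|\vec{x}\|_p \le 1$, so $e^{\ran}(n,\cdot) \le 1$ for every $n$; this takes care of the ``$\min\{1,\cdot\}$'' and lets me assume $n$ is large (and in particular larger than any fixed constant). Next I would make the ansatz $\eps^p \asymp (\log n + \log\log m)/n$, equivalently $n \asymp \eps^{-p}(\log n + \log\log m)$, and verify it is consistent with the requirement $n \ge C(\log\eps^{-1}+\log\log m)\eps^{-p}$. The point is that with this choice $\log\eps^{-1} = \tfrac1p(\log n - \log(\log n + \log\log m)) \le \tfrac1p \log n \preceq \log n$, so $\log\eps^{-1} + \log\log m \preceq \log n + \log\log m \asymp n\eps^p$, and hence $C(\log\eps^{-1}+\log\log m)\eps^{-p} \preceq n$. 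Making the implicit constants explicit, one picks $\eps := \bigl(A(\log n + \log\log m)/n\bigr)^{1/p}$ for a suitable large constant $A = A(p,C)$ so that the cardinality bound is genuinely $\le n$; one should also check $\eps < 1$, which holds once $n$ is large enough relative to the constant (and if $\eps \ge 1$ we are back in the trivial case $e^{\ran} \le 1$). Then $e^{\ran}(n,\ell_p^m\embed\ell_\infty^m) \le e(A_p^\eps, \ell_p^m\embed\ell_\infty^m) \le \eps = \bigl(A(\log n+\log\log m)/n\bigr)^{1/p} \preceq \bigl((\log n + \log\log m)/n\bigr)^{1/p}$, which is the claimed bound.

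The only mildly delicate point is the self-referential nature of the equation $n \asymp \eps^{-p}(\log n + \log\log m)$: one must be careful that substituting $\eps \asymp (\log n + \log\log m)^{1/p} n^{-1/p}$ really does bring the cardinality below $n$ and not merely below $n \cdot \mathrm{polyloglog}$. This is resolved precisely by the observation above that $\log\eps^{-1} \preceq \log n$, which decouples the recursion — the ``$\log \eps^{-1}$'' term is absorbed into the ``$\log n$'' term and no iteration is needed. I expect this bookkeeping with the constants (ensuring one genuinely gets $\card(A_p^\eps) \le n$ rather than $\preceq n$, and handling the boundary where the chosen $\eps$ would exceed $1$) to be the main, though still routine, obstacle; everything else is a direct appeal to Corollary~\ref{cor:CostExpectedError} and the definition of $e^{\ran}$.
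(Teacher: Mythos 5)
Your proposal is correct and follows essentially the same route as the paper: invert the cardinality bound of Corollary~\ref{cor:CostExpectedError} by setting $\eps \asymp \bigl((\log n + \log\log m)/n\bigr)^{1/p}$, observe that $\log \eps^{-1} \leq \log n$ to break the apparent circularity, and fall back on the zero algorithm (error $\leq \|\vec{x}\|_p \leq 1$) when this $\eps$ would exceed $1$. The paper even makes the constant bookkeeping exact by taking $A = C$ (with $C \geq 1$ and $m \geq 16$ so that $\log\log m > 1$), which yields $\card(A_p^\eps) \leq n$ on the nose.
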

\begin{proof}
    For vectors $\|\vec{x}\|_p \leq 1$,
    we may achieve an expected $\ell_\infty$-error 
    below the threshold $\eps \in (0,1)$
    by employing the algorithm~$A_p^\eps$
    as long as
    cardinality of $A_p^\eps$ does not exceed the given budget~$n$.
    If no such $\eps \in (0,1)$ exists,
    we may resort to the zero algorithm $A_0$ that requires zero information and returns $A_0(\vec{x}) = \vec{0}$, resulting in the error $\|\vec{0} - \vec{x}\|_\infty \leq \|\vec{x}\|_p \leq 1$.

    Corollary~\ref{cor:CostExpectedError} states a cardinality bound for $\eps \in (0,1)$ with implicit constant $C = C_{p} > 0$, and for our purpose this bound should not exceed $n$:
    \begin{equation} \label{eq:cost<=n}
        \card(A_{p}^{\eps}) \leq
            C \cdot \left(\log \eps^{-1} +\log\log m\right)
                \cdot \eps^{-p}
            \stackrel{!}{\leq} n \,.
    \end{equation}
    Restricting to 
    $m \geq 16$ gives $\log \log m > 1$,
    without loss of generality we assume $C \geq 1$,
    and with this and $\log n \geq 0$ in mind we define
    \begin{equation} \label{eq:eps(n)}
        \eps := \left(C \cdot \frac{\log n + \log \log m}{n}\right)^{\frac{1}{p}} .
    \end{equation}
    If this definition leads to $\eps \geq 1$,
    we are in the case where the zero algorithm $A_0$ prevails.
    If, however, the definition leads to $\eps \in (0,1)$,
    observe that in particular it
    gives $\log \eps^{-1} \leq \log n$,
    hence, \eqref{eq:cost<=n} shows that the cardinality of $A_p^\eps$ is indeed
    bounded by~$n$.
\end{proof}

\begin{remark} \label{rem:nonada_p>2}
So far we did not cover uniform approximation for the weaker assumption $\|\vec{x}\|_p \leq 1$ with $p \in (2,\infty)$.
In this case, exploiting $\|\vec{x}\|_2 \leq m^{\frac{1}{2} - \frac{1}{p}} \|\vec{x}\|_p$ for $\vec{x} \in \R^m$ and homogeneity of the method presented,
we may simply define a method via the $\ell_2$-case,
\begin{equation*}
    A_p^{\eps} := A_2^{\eps'} 
    \qquad \text{where } \eps' := m^{-\left( \frac{1}{2} - \frac{1}{p}\right)}\eps \,.
\end{equation*}
With $\log (\eps')^{-1} \asymp \log \eps^{-1} + \log m$,
Corollary~\ref{cor:CostExpectedError} implies the cardinality bound
\begin{equation*}
    \card(A_p^{\eps}) \preceq \left(\log \eps^{-1} + \log m\right) \cdot \eps^{-2} \cdot m^{1 - \frac{2}{p}} \,,
\end{equation*}
which is the best we can hope for with any algorithm that employs bucket selection, see Remark~\ref{rem:p>2nobuckets}.
By inversion or by direct application of Theorem~\ref{thm:errorrates}, we find corresponding error rates for $p \in (2,\infty)$:
\begin{align*}
    e^{\ran}(n, \ell_p^m \embed \ell_{\infty}^m)
        &\leq \min\left\{1\,,\, 
                    m^{\frac{1}{2} - \frac{1}{p}} \cdot
                    e^{\ran}(n, \ell_2^m \embed \ell_{\infty}^m) \right\} \\
        &\preceq \min\left\{1\,,\,
                m^{\frac{1}{2} - \frac{1}{p}} \cdot \sqrt{\frac{\log n + \log \log m}{n}}
                \right\} \\
        &\asymp \min\left\{1,
                m^{\frac{1}{2} - \frac{1}{p}} \cdot \sqrt{\frac{\log m}{n}} \right\}.
\end{align*}
The last asymptotic equivalence follows from the fact that
$n \succeq m^{1 - \frac{2}{p}}$ if the asymptotic upper bound shall be smaller than~$1$, hence, together with $n < m$ (the relevant range of incomplete information), we have $\log n \asymp \log m$.
From~\cite[Lem~5]{Ma91}, however, see also~\cite[eq~(3.2.5)]{Ku17},
we already know a linear (thus non-adaptive) randomized method that achieves precisely this error rate.
\end{remark}

\begin{remark}[$q < \infty$]
    Algorithms for uniform approximation can also be applied
    to find $\ell_q$-approximations with $p < q < \infty$.
    The key observation is:
    If for a threshold $\eps \in (0,1)$ we have
    $K \supseteq K_{\eps} := \{j \in [m] \colon |x_j| \geq \eps\}$,
    then $\|\vec{x} - \vec{x}^\ast_K\|_{\infty} \leq \eps$.
    Trivially, $\|\vec{x} - \vec{x}^\ast_K\|_{\infty} \leq \|\vec{x}\|_p$,
    and assuming $\|\vec{x}\|_p \leq 1$, 
    by interpolation we have
    \begin{align*}
        \frac{1}{q} &= \frac{\lambda}{p} + \frac{1-\lambda}{\infty}
        \quad\text{with } \lambda = \frac{p}{q} \in (0,1)\,, \\
        \|\vec{x} - \vec{x}^\ast_K\|_{q}
            &\leq \|\vec{x} - \vec{x}^\ast_K\|_p^\lambda \cdot \|\vec{x} - \vec{x}^\ast_K\|_\infty^{1-\lambda}
            \leq 1^\lambda \cdot \eps^{1-\lambda}
            = \eps^{1-\frac{p}{q}}\,,
    \end{align*}
    see for instance \cite[Lem~2.4]{KNR19}.
    For $1 \leq p \leq 2$,
    Theorem~\ref{thm:errorrates} then leads to
    \begin{align*}
        e^{\ran}(n, \ell_p^m \embed \ell_q^m)
            &\leq \left[e^{\ran}(n, \ell_p^m \embed \ell_{\infty}^m)\right]^{1-\frac{p}{q}} \\
            &\preceq
                \left(\frac{\log n + \log \log m}{n}\right)^{\frac{1}{p} - \frac{1}{q}}
    \end{align*}
    In an upcoming paper~\cite{KW24c}, however, with the help of an algorithm that combines multiple levels of approximation, we will show the better error bound
    \begin{equation*}
        e^{\ran}(n, \ell_p^m \embed \ell_q^m)
            \preceq
                \left(\frac{\log \log \frac{m}{n}}{n}\right)^{\frac{1}{p} - \frac{1}{q}}
    \end{equation*}
    for $1 \leq p \leq 2$ and $p < q < \infty$.
\end{remark}

\section{Adaption versus non-adaption}
\label{sec:adagap}

We improve upon a result from~\cite{KNW24}.
Combining the upper bounds of Theorem~\ref{thm:errorrates}
with lower bounds for the non-adaptive Monte Carlo setting, we obtain the following gap between the error of adaptive and non-adaptive algorithms for a linear problem.
This gap is, to our knowledge, the largest known gap.

\begin{theorem} \label{thm:adagap}
Let $n \in \N$ and 
$m = m(n) := \bigl\lceil C e^{an^2}\bigr\rceil$ with the constants $C,a>0$ from \cite[Thm~2.7]{KNW24}.
Then
\begin{equation*}
    \frac{e^{\ran,\ada}(n,\ell_1^m \embed \ell_\infty^m)}
        {e^{\ran,\nonada}(n,\ell_1^m \embed \ell_\infty^m)}
    \preceq \frac{\log n}{n} \,.
\end{equation*}
\end{theorem}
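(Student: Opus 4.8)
The plan is to combine the adaptive upper bound from Theorem~\ref{thm:errorrates} with the non-adaptive lower bound from \cite[Thm~2.7]{KNW24}, evaluated at the specific super-exponential dimension $m = m(n)$. First I would recall what \cite[Thm~2.7]{KNW24} gives: a lower bound for the non-adaptive randomized error of $\ell_1^m \embed \ell_\infty^m$ of the form $e^{\ran,\nonada}(n,\ell_1^m \embed \ell_\infty^m) \succeq \min\{1, \sqrt{(\log m)/n} \, \}$ or, more precisely, the statement that this quantity stays bounded away from a small constant as long as $n \preceq \sqrt{\log m}$; the constants $C,a>0$ are exactly the thresholds making $\log m \asymp n^2$, so that $\sqrt{(\log m)/n} \asymp \sqrt{n^2/n} = \sqrt{n} \gtrsim 1$, i.e. the non-adaptive error is bounded below by a positive absolute constant for this choice of $m(n)$.

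Next I would bound the numerator. By Theorem~\ref{thm:errorrates} with $p=1$,
\begin{equation*}
    e^{\ran,\ada}(n,\ell_1^m \embed \ell_\infty^m)
    \preceq \frac{\log n + \log \log m}{n} \,.
\end{equation*}
Plugging in $m = m(n) = \lceil C e^{an^2} \rceil$ gives $\log m \asymp n^2$, hence $\log \log m \asymp \log(n^2) \asymp \log n$, so the numerator is $\preceq (\log n)/n$. Dividing the two bounds, the constant denominator disappears into the implicit constant and we obtain the claimed ratio $\preceq (\log n)/n$.

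The steps in order: (1) state the non-adaptive lower bound from \cite[Thm~2.7]{KNW24} and note that for $m = \lceil C e^{an^2}\rceil$ it yields $e^{\ran,\nonada}(n,\ell_1^m \embed \ell_\infty^m) \succeq c_0$ for some absolute constant $c_0 > 0$ (or, if one prefers not to claim it is bounded below by a constant, simply $\succeq \sqrt{(\log m)/n}$ and then estimate $\sqrt{(\log m)/n} \asymp \sqrt{n}$); (2) apply Theorem~\ref{thm:errorrates} at $p=1$ to bound the adaptive error from above; (3) simplify $\log\log m \asymp \log n$ using $\log m \asymp n^2$, so the adaptive bound is $\preceq (\log n)/n$; (4) take the quotient. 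I expect the only genuine subtlety to be bookkeeping the exact form and range of validity of \cite[Thm~2.7]{KNW24} — one must check that the dimension $m(n)$ indeed falls in the regime where that lower bound is a positive constant (equivalently, that $n$ is below the relevant threshold $\asymp \sqrt{\log m}$), which is precisely why $m(n)$ was defined with the constants $C,a$ taken from that theorem; everything else is a one-line asymptotic computation.
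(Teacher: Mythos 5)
Your proposal matches the paper's proof: the paper likewise cites \cite[Thm~2.7]{KNW24} to get that any non-adaptive algorithm has error larger than $\tfrac{1}{100}$ for this choice of $m(n)$, and then applies Theorem~\ref{thm:errorrates} with $\log\log m \asymp \log n$ to bound the adaptive error by $(\log n)/n$. One small caution: your parenthetical alternative reading of the lower bound as $\succeq \sqrt{(\log m)/n} \asymp \sqrt{n}$ cannot be literally correct, since the minimal error of $\ell_1^m \embed \ell_\infty^m$ never exceeds $1$; the correct statement is the constant lower bound you give as your primary route.
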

\begin{proof}
    The lower bounds in \cite[Thm~2.7]{KNW24} show that,
    for the particular value of $m$ relative to~$n$,
    any non-adaptive algorithm will exhibit an error larger than~$\frac{1}{100}$.
    In contrast, the adaptive upper bound of Theorem~\ref{thm:errorrates},
    using the asymptotic equivalence $\log \log m \asymp \log n$ 
    in the particular setup,
    gives precisely the rate we state for the gap between adaption and non-adaption.
\end{proof}

The theorem above considers a different problem for every $n$.
We can also find 
a single linear operator~$S$
such that a similar gap up to logarithmic terms can be observed
for the $n$-th minimal error
as stated in the next corollary.

\begin{corollary} \label{cor:diagop}
    For every $\alpha > 0$ there exists a linear operator $S = S_{\alpha}$ such that (for large $n$)
    \begin{equation*}
        \frac{e^{\ran,\ada}(n,S)}{e^{\ran,\nonada}(n,S)}
            \preceq \frac{(\log n)^2 \cdot (\log \log n)^{1+\alpha}}{n} \,.
    \end{equation*}
\end{corollary}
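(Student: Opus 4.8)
The plan is to build $S = S_\alpha$ as an infinite diagonal-type operator assembled from a sequence of the finite-dimensional embeddings $\ell_1^{m_k} \embed \ell_\infty^{m_k}$ appearing in Theorem~\ref{thm:adagap}, arranged on disjoint blocks of coordinates, so that for a budget of roughly $n_k$ measurements the problem ``looks like'' the $k$-th block. Concretely, I would pick an increasing sequence $(n_k)_{k\in\N}$ and set $m_k := m(n_k) = \lceil C e^{a n_k^2}\rceil$ from \cite[Thm~2.7]{KNW24}; the $k$-th block contributes a copy of $\R^{m_k}$ with the domain carrying a suitably \emph{scaled} $\ell_1$-norm (weights $c_k>0$ to be chosen) and the target carrying the $\ell_\infty$-norm. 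One then defines $S$ on $F = \bigl(\bigoplus_k \ell_1^{m_k}\bigr)$ with the weighted $\ell_1$ cross-norm into $G = \bigl(\bigoplus_k \ell_\infty^{m_k}\bigr)$ with the sup cross-norm, acting as the identity on each block. The weights $c_k$ must decay fast enough that the tail blocks $k' > k$ contribute negligibly to the error once we have spent our budget on block $k$, yet slowly enough that block $k$ itself still dominates; a geometric-type decay tuned to the gap rate will do.

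Next I would estimate the two minimal errors. For the \textbf{non-adaptive lower bound}: given a budget $n$, choose $k = k(n)$ so that $n_k \asymp n$ (so $n < $ something like a constant times $n_k$, making non-adaptive methods on block $k$ fail); restricting inputs to block $k$ and invoking \cite[Thm~2.7]{KNW24} forces $e^{\ran,\nonada}(n,S) \succeq c_k$, because a non-adaptive algorithm for $S$ restricts to a non-adaptive algorithm for that block with the same cardinality. For the \textbf{adaptive upper bound}: I would run the algorithm $A_1^{\eps_k}$ of Corollary~\ref{cor:CostExpectedError} on block $k$ only (outputting $\vec 0$ on all other blocks), with $\eps_k$ chosen so that the cardinality $\preceq (\log\eps_k^{-1} + \log\log m_k)\cdot \eps_k^{-1}$ fits the budget $n$. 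Using $\log\log m_k \asymp \log n_k^2 \asymp \log n$ (this is exactly the point where the double exponential in $m(n)$ collapses the $\log\log m$ to $\log n$, just as in the proof of Theorem~\ref{thm:adagap}), this yields an $\ell_\infty$-error on block $k$ of order $c_k\cdot \frac{\log n}{n}$, while the tail blocks contribute at most $\sum_{k'>k} c_{k'}$, which by the choice of weights is $\ll c_k\cdot\frac{\log n}{n}$. Dividing, the ratio is $\asymp \frac{\log n}{n}$ up to whatever slack the weight sequence and the passage from continuous $n$ to the discrete grid $(n_k)$ introduce.

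The \textbf{main obstacle} is precisely controlling that slack: the sequence $(n_k)$ is discrete, so for a general budget $n$ lying strictly between $n_k$ and $n_{k+1}$ one must interpolate, and the gaps $n_{k+1}-n_k$ together with the weight ratios $c_{k+1}/c_k$ determine how much worse than $\frac{\log n}{n}$ the bound becomes. Making the $n_k$ grow like $n_k \asymp 2^k$ forces $m_k$ to be doubly exponential in $k$ and leaves only constant-factor interpolation slack, but then one must double-check that $\log\log m_k \asymp \log n_k$ still holds uniformly and that the non-adaptive lower bound from \cite[Thm~2.7]{KNW24} applies at the in-between cardinalities — likely costing the extra $(\log n)$ and $(\log\log n)^{1+\alpha}$ factors in the statement, where the $\alpha$-dependent term comes from needing $\sum_k c_k$ to converge, i.e.\ from the summability of something like $1/(k(\log k)^{1+\alpha})$. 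So the quantitative heart of the proof is a careful bookkeeping of three nested logarithms against the block structure; everything else (homogeneity of $A_1^{\eps}$, additivity of errors over blocks in the $\ell_\infty$-sup cross-norm, restriction of algorithms to a block) is routine and already licensed by the results in Section~\ref{sec:AdaAppInf}.
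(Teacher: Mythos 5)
Your overall architecture matches the paper's: a diagonal operator $S_\alpha\colon\ell_1\to\ell_\infty$ built from blocks of dimension $m_k=\lceil Ce^{aN_k^2}\rceil$ with $N_k=2^k$, weights decaying like $1/(k(\log k)^{1+\alpha})$, the non-adaptive lower bound obtained by restriction to a single block via \cite[Thm~2.7]{KNW24}, and the adaptive upper bound from Theorem~\ref{thm:errorrates} with $\log\log m_k\asymp\log N_k$. However, your adaptive upper bound has a genuine gap that breaks the argument.

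You propose to run the adaptive algorithm \emph{on block $k$ only} and output $\vec 0$ on all other blocks, claiming the neglected blocks contribute $\sum_{k'>k}c_{k'}\ll c_k\cdot\frac{\log n}{n}$. This fails for two reasons. First, in the sup cross-norm the zeroed blocks contribute $\sup_{k'\neq k}c_{k'}$ (take a unit vector supported on a single zeroed block), which for any weights compatible with the lower bound is of the same order as $c_k$ itself --- e.g.\ $c_{k+1}\asymp c_k$ when $c_k=1/(k(\log k)^{1+\alpha})$ --- so the resulting adaptive error is comparable to the non-adaptive lower bound and the ratio is $\asymp 1$, not $\frac{\log n}{n}$. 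Forcing $c_{k+1}\ll c_k\frac{\log n_k}{n_k}$ instead would require super-exponentially decaying weights, and then the non-adaptive lower bound at intermediate budgets $n\in[n_k,n_{k+1})$ degrades to $c_{k+1}$, again destroying the gap. Second, you also zero the blocks $k'<k$, which alone already costs an error of order $c_1$.

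The missing idea is that the adaptive algorithm must spend effort on \emph{many blocks simultaneously}: the paper splits the budget $N_k=2^k$ into pieces $n_j\asymp c_\alpha^{-1}j^{-1-\alpha}2^k$ for $j=1,\dots,J_k$ with $J_k\asymp 2^{k/(1+\alpha)}$, runs a separate instance of the adaptive algorithm on each block $\mathfrak m_j$, and only zeroes the tail beyond $J_k$, whose weight $1/(2^{J_k}J_k^{1+\alpha})$ is then genuinely below the target error $k\,2^{-k}$. The $\alpha$-summability you correctly sense is needed enters primarily through the budget allocation $\sum_j j^{-1-\alpha}<\infty$ (so the pieces fit into the total budget), not only through the weights; and the extra $(\log n)\cdot(\log\log n)^{1+\alpha}$ beyond Theorem~\ref{thm:adagap} comes from the non-adaptive lower bound being only $1/(k(\log k)^{1+\alpha})\asymp(\log n)^{-1}(\log\log n)^{-1-\alpha}$ rather than a constant, not from discretization slack between consecutive $n_k$.
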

\begin{proof}
Similarly to~\cite[Rem~3.4]{KNW24} we consider the following example
defined with suitable constants $C,a > 0$:
\begin{align*}
    &N_k := 2^k,
    \quad m_k := \bigl\lceil C e^{aN_k^2}\bigr\rceil
    \qquad\text{for } k \in \N\,,\\
    &S_{\alpha} \colon \ell_1 \to \ell_\infty, \; \vec{x} \mapsto \vec{z}
    \quad\text{where } z_i := \begin{cases}
        x_i &\text{for } 1 \leq i \leq m_1 \\
        \displaystyle \frac{x_i}{k \cdot (\log k)^{1+\alpha}}
            &\text{for } m_{k-1} < i \leq m_k,\; k\geq 2 \,.
    \end{cases}
\end{align*}
As in \cite{KNW24} we find non-adaptive lower bounds for $k \geq 2$:
\begin{align*}
    e^{\ran,\nonada}(N_k,S_{\alpha})
        &\geq \frac{1}{k (\log k)^{1+\alpha}}
            \cdot e^{\ran,\nonada}(N_k,\ell_1^{m_k} \embed \ell_\infty^{m_k}) \\
        &\geq \frac{1}{100 k (\log k)^{1+\alpha}}
            \asymp (\log N_k)^{-1} (\log \log N_k)^{-1-\alpha} \,.
\end{align*}
For the upper bounds we define `splitting' indices $l_j := 2^j$ for $j \in \N$,
and index sets $\mathfrak{m}_1 := [m_{l_1}]$, $\mathfrak{m}_j := [m_{l_j}] \setminus [m_{l_{j-1}}]$ for $j \geq 2$.
Given a budget of $N_k = 2^k$ pieces of information,
we spend at most $n_j$ 
measurements on approximating $\vec{x}_{\mathfrak{m}_j}$ for $j=1,\ldots,k$,
where the individual budgets form a decaying sequence:
\begin{equation*}
    n_j := \left\lfloor c_{\alpha}^{-1} j^{-1-\alpha} N_k \right\rfloor
        = \left\lfloor c_{\alpha}^{-1} j^{-1-\alpha} 2^k \right\rfloor,
    \qquad
    c_{\alpha} := \sum_{j=1}^\infty j^{-1-\alpha} < \infty \,.
\end{equation*}
Let $J_k$ be the last index~$j$ for which $n_j \geq 1$.
We find that $J_k \asymp 2^{k/(1+\alpha)}$, in particular, $J_k \geq k$ for large $k$.
Further, $n_j \asymp j^{-1-\alpha} \, 2^k$ for $j \leq J_k$.
For the approximation of $\vec{x}_{\mathfrak{m}_j}$ for $j \leq J_k$ we employ a homogeneous, adaptive algorithm $A_{n_j}$ as in Theorem~\ref{thm:errorrates}, while approximating the tail $\vec{x}_{\N \setminus [m_{(2^{J_k})}]}$ with zero,
resulting in the algorithm
\begin{equation*}
    A_k^{\alpha}(x)
        := S_{\alpha} \left(A_{n_1}(\vec{x}_{\mathfrak{m}_1}),
        \ldots, A_{n_{J_k}}(\vec{x}_{\mathfrak{m}_{J_k}}), 0, 0, \ldots \right) \,.
\end{equation*}
With $\log \log(\#\mathfrak{m}_j) \preceq \log N_{l_j} \preceq l_j = 2^j$,
we have the following error bounds on the approximation of the blocks:
\begin{align*}
    \expect\|\vec{x}_{\mathfrak{m}_j} - A_{n_j}(\vec{x}_{\mathfrak{m}_j})\|_\infty
        &\preceq \|\vec{x}_{\mathfrak{m}_j}\|_1 \cdot \frac{\log n_j + \log \log(\#\mathfrak{m}_j)}{n_j} \\
        &\preceq \|\vec{x}_{\mathfrak{m}_j}\|_1 \cdot \frac{(k + 2^j) \cdot j^{1+\alpha}}{2^k} \,.
\end{align*}
Applying the triangle inequality on the expected error of $A_k^\alpha$,
we obtain the following adaptive upper bound (using $J_k \geq k$ for large $k$):
\begin{align*}
    e^{\ran}&(N_k,S_{\alpha}) \preceq e(A_k^{\alpha}, S_{\alpha})\\
        &\preceq \sup_{\|\vec{x}\|_1 \leq 1} \left(\frac{\|\vec{x}_{\N \setminus [m_{(2^k)}]}\|_1}{2^{J_k} \cdot J_k^{1+\alpha}} + \sum_{j=1}^{J_k} \frac{\|\vec{x}_{\mathfrak{m}_j}\|_1}{\max\{2^{j-1} (j-1)^{1+\alpha} , 1\}} \cdot \frac{(k + 2^j) \cdot j^{1+\alpha}}{2^k} \right) \\
        &= \max\left\{\frac{1}{2^{J_k} \cdot J_k^{1+\alpha}} \,,\,
        \frac{1}{\max\{2^{j-1} (j-1)^{1+\alpha} , 1\}} \cdot \frac{(k + 2^j) \cdot j^{1+\alpha}}{2^k}\right\}_{j=1}^{J_k}  \\
        &\preceq \max\left\{ 2^{-k} \cdot k^{-1-\alpha},\;
                            2^{-k-j} \cdot k ,\;
                            2^{-k}\right\}_{j=1}^{J_k} 
        \leq k \cdot 2^{-k} \asymp \frac{\log N_k}{N_k} \,.
\end{align*}
Since $N_k$ is chosen in dyadic steps, we find asymptotic rates for 
general (large)~$n$.
Combining non-adaptive lower bounds and adaptive upper bounds, we prove the gap as stated.
\end{proof}

\begin{remark} \label{rem:largestgap}
    According to~\cite[Cor~14]{KNU24}, for linear problems
    (where we consider inputs from the unit ball of a normed space)
    the largest possible gap between adaptive and non-adaptive error rates is of order $n$ (up to logarithmic terms).
    Corollary~\ref{cor:diagop} gives such an extreme example.
    
    In~\cite[Rem.~3.4]{KNW24} examples of operators $S \colon \ell_1 \to \ell_2$ were found with a gap of order at least $\sqrt{n}$ (up to logarithmic terms) for adaptive vs.\ non-adaptive algorithms. Indeed, the gap cannot be larger if the target space is a Hilbert space, see~\cite[Cor~14]{KNU24}.
    As stated there, the same holds if the source space is a Hilbert space,
    and with our results for uniform approximation we can now also give matching examples with operators~$S \colon \ell_2 \to \ell_\infty$.

    Analogously to Theorem~\ref{thm:adagap}, with appropriate choice of $m = m(n)$, by Theorem~\ref{thm:errorrates} we find a sequence space embedding with
    \begin{equation*}
        \frac{e^{\ran,\ada}(n,\ell_2^m \embed \ell_\infty^m)}
            {e^{\ran,\nonada}(n,\ell_2^m \embed \ell_\infty^m)}
        \preceq \sqrt{\frac{\log n}{n}} \,.
    \end{equation*}
    For this result we use that lower bounds for $\ell_1^m \embed \ell_\infty^m$, see~\cite{KNW24}, also hold for $\ell_2^m \embed \ell_\infty^m$.
    In fact, lower bounds for the second case are even more easily obtained,
    compare \cite{PW12}, and we might choose
    $m(n) = \left\lceil C \cdot e^{an} \right\rceil$ instead of the larger value 
    \mbox{$m(n) = \left\lceil C \cdot e^{an^2} \right\rceil$}.
    In both cases, however, $\log \log m \asymp \log n$.

    Using a construction similar to the one in the proof of Corollary~\ref{cor:diagop},
    for all $\alpha > 0$ we find an infinite rank diagonal operator $S \colon \ell_2 \to \ell_\infty$ such that
    \begin{equation*}
        \frac{e^{\ran,\ada}(n,S)}
            {e^{\ran,\nonada}(n,S)}
        \preceq \frac{(\log n) \cdot (\log \log n)^{(1+\alpha)/2}}{\sqrt{n}} \,.
    \end{equation*}
    In detail, we define $S \colon \ell_2 \to \ell_\infty$, $\vec{x} \mapsto \vec{z}$, with
    \begin{equation*}
        z_i := \begin{cases}
                    x_i &\text{for } 1 \leq i \leq m_1 \,, \\
                    \displaystyle \frac{x_i}{\sqrt{k \cdot (\log k)^{1+\alpha}}} 
                        &\text{for } m_{k-1} < i \leq m_k,\; k\geq 2 \,.
                \end{cases}
    \end{equation*}
    The proof follows the lines of Corollary~\ref{cor:diagop},
    only that we have the square root of all asymptotic error estimates.
\end{remark}

\appendix

\bibliographystyle{amsplain}

\bibliography{lit}

\providecommand{\bysame}{\leavevmode\hbox to3em{\hrulefill}\thinspace}
\providecommand{\MR}{\relax\ifhmode\unskip\space\fi MR }
\providecommand{\MRhref}[2]{%
  \href{http://www.ams.org/mathscinet-getitem?mr=#1}{#2}
}
\providecommand{\href}[2]{#2}
\begin{thebibliography}{10}

\bibitem{CCF04}
M.~Charikar, K.~Chen, and M.~Farach-Colton, \emph{Finding frequent items in
  data streams}, Theoretical Computer Science \textbf{312} (2004), no.~1,
  3--15.

\bibitem{CDD09}
A.~Cohen, W.~Dahmen, and R.~DeVore, \emph{Compressed sensing and best $k$-term
  approximation}, J.~of the AMS {\bf 22} (2009), 211--231.

\bibitem{H92}
S.~Heinrich, \emph{Lower bounds for the complexity of {M}onte {C}arlo function
  approximation}, J.~Complexity \textbf{8} (1992), 277--300.

\bibitem{He24}
\bysame, \emph{Randomized complexity of mean computation and the adaption
  problem}, J.~Complexity \textbf{85} (2024), 101872.

\bibitem{He22}
\bysame, \emph{Randomized complexity of parametric integration and the role of
  adaption {I}. {F}inite dimensional case}, J.~Complexity \textbf{81} (2024),
  101821.

\bibitem{He23b}
\bysame, \emph{Randomized complexity of parametric integration and the role of
  adaption {II}. {S}obolev spaces}, J.~Complexity \textbf{82} (2024), 101823.

\bibitem{He23}
\bysame, \emph{Randomized complexity of vector-valued approximation}, Monte
  Carlo and Quasi-Monte Carlo Methods (A.~Hinrichs, P.~Kritzer, and
  F.~Pillichshammer, eds.), Springer International Publishing, 2024,
  pp.~355--371.

\bibitem{IPW11}
P.~Indyk, E.~Price, and D.P. Woodruff, \emph{On the power of adaptivity in
  sparse recovery}, 2011 {IEEE} 52nd {A}nnual {S}ymposium on {F}oundations of
  {C}omputer {S}cience, 2011, pp.~285--294.

\bibitem{KK24}
D.~Krieg and P.~Kritzer, \emph{Homogeneous algorithms and solvable problems on
  cones}, J.~Complexity \textbf{83} (2024), 101840.

\bibitem{KNU24}
D.~Krieg, E.~Novak, and M.~Ullrich, \emph{On the power of adaption and
  randomization}, preprint on arXiv:2406.07108 [math.NA] (2024).

\bibitem{Ku17}
R.J. Kunsch, \emph{High-dimensional function approximation: Breaking the curse
  with {M}onte {C}arlo methods}, Dissertation, FSU Jena, available on
  arXiv:1704.08213 [math.NA] (2017).

\bibitem{KNR19}
R.J. Kunsch, E.~Novak, and D.~Rudolf, \emph{Solvable integration problems and
  optimal sample size selection}, J.~Complexity \textbf{53} (2019), 40--67.

\bibitem{KNW24}
R.J. Kunsch, E.~Novak, and M.~Wnuk, \emph{Randomized approximation of summable
  sequences -- adaptive and non-adaptive}, J.~Approximation \textbf{304}
  (2024), 106056.

\bibitem{KW24c}
R.J. Kunsch and M.~Wnuk, \emph{Approximation of vectors in the $\ell_q$-norm
  using adaptive randomized information}, in preparation.

\bibitem{LNW17}
Yi~Li, V.~Nakos, and D.P. Woodruff, \emph{On low-risk heavy hitters and sparse
  recovery schemes}, arXiv:1709.02819 [hep-th] (2017).

\bibitem{LNW18}
\bysame, \emph{{On Low-Risk Heavy Hitters and Sparse Recovery Schemes}},
  Approximation, Randomization, and Combinatorial Optimization. Algorithms and
  Techniques (APPROX/RANDOM 2018) (Dagstuhl, Germany) (Eric Blais, Klaus
  Jansen, Jos{\'e} D.~P. Rolim, and David Steurer, eds.), Leibniz International
  Proceedings in Informatics (LIPIcs), vol. 116, Schloss
  Dagstuhl--Leibniz-Zentrum fuer Informatik, 2018, pp.~19:1--19:13.

\bibitem{Ma91}
P.~Math\'e, \emph{Random approximation of {S}obolev embeddings}, J.~Complexity
  \textbf{7} (1991), 261--281.

\bibitem{NSWZ18}
V.~Nakos, X.~Shi, D.P. Woodruff, and H.~Zhang, \emph{Improved algorithms for
  adaptive compressed sensing}, International Colloquium on Automata, Languages
  and Programming, 2018.

\bibitem{NP09}
W.~Niemiro and P.~Pokarowski, \emph{Fixed precision {MCMC} estimation by median
  of products of averages}, Journal of Applied Probability \textbf{46} (2009),
  no.~2, 309--329.

\bibitem{No96}
E.~Novak, \emph{On the power of adaption}, J.~Complexity \textbf{12} (1996),
  199--237.

\bibitem{PW12}
E.~Price and D.~Woodruff, \emph{Lower bounds for adaptive sparse recovery},
  Proceedings of the twenty-fourth annual ACM-SIAM symposium on Discrete
  algorithms, SIAM, 2013, pp.~652--663.

\bibitem{TWW88}
J.F. Traub, G.~Wasilkowski, and H.~Woźniakowski, \emph{{I}nformation-{B}ased
  {C}omplexity}, Academic Press, 1988.

\bibitem{TW80}
J.F. Traub and H.~Woźniakowski, \emph{A general theory of optimal algorithms},
  ACM monograph series, Acad. Pr., 1980.

\end{thebibliography}

\end{document}